\newcommand{\Id}{\mathrm{id}}
\newcommand{\Z}{{\mathbb Z}}
\newcommand{\N}{{\mathbb N}}
\newcommand{\A}{{\mathbb A}}
\newcommand{\OO}{{\mathcal O}}
\newcommand{\aone}{{\mathbb A}^1}
\newcommand{\pone}{{\mathbb P}^1}
\newcommand{\Gm}{{\mathbb G}_{m}}
\newcommand{\MW}{\mathrm{MW}}
\newcommand{\M}{{\mathrm{M}}}
\newcommand{\bpi}{\bm{\pi}}
\newcommand{\piaone}{{\bpi}^{\aone}}
\newcommand{\Nis}{{\operatorname{Nis}}}
\newcommand{\K}{{{\mathbf K}}}
\newcommand{\KMW}{\K^{\MW}}
\newcommand{\KM}{\K^{\M}}
\newcommand{\holim}{\operatornamewithlimits{holim}}
\renewcommand{\setminus}{\smallsetminus}
\newcommand{\Addresses}{{
\bigskip
\footnotesize

J.~Fasel, Institut Fourier - Univ. Grenoble Alpes, CNRS, IF, 38000 Grenoble, France, \textit{E-mail address:} \url{jean.fasel@univ-grenoble-alpes.fr}
}}
\newcounter{intro}
\theoremstyle{plain}
\newtheorem{thm}{Theorem}[subsection]
\newtheorem{lem}[thm]{Lemma}
\newtheorem{cor}[thm]{Corollary}
\newtheorem{prop}[thm]{Proposition}
\newtheorem*{claim*}{Claim} 
\newtheorem*{thm*}{Theorem}
\newtheorem*{problem*}{Problem}
\newtheorem{thmintro}{Theorem}
\newtheorem{conjintro}[thmintro]{Conjecture}
\theoremstyle{definition}
\theoremstyle{remark}
\newtheorem{rem}[thm]{Remark}
\numberwithin{equation}{subsection}
\begin{document}
\pagestyle{fancy}
\renewcommand{\sectionmark}[1]{\markright{\thesection\ #1}}
\fancyhead{}
\fancyhead[LO,R]{\bfseries\footnotesize\thepage}
\fancyhead[LE]{\bfseries\footnotesize\rightmark}
\fancyhead[RO]{\bfseries\footnotesize\rightmark}
\chead[]{}
\cfoot[]{}
\setlength{\headheight}{1cm}

\author{Jean Fasel}

\title{{\bf Suslin's cancellation conjecture in the smooth case}}
\date{}
\maketitle

\begin{abstract}
We prove that stably isomorphic vector bundles of rank $d-1$ on a smooth affine $d$-fold $X$ over an algebraically closed field $k$ are indeed isomorphic, provided $d!\in k^\times$. This answers an old conjecture of Suslin.
\end{abstract}

\section{Introduction}

Let $k$ be a field and let $X$ be an affine connected scheme of finite type over $k$. For any $n\in\N$, let $\mathcal{V}_n(X)$ be the set of isomorphism classes of vector bundles of rank $n$ over $X$ pointed by the class of the free bundle of rank $n$. Adding a free factor of rank one induces a map of pointed sets
\[
s_n:\mathcal{V}_n(X)\to \mathcal{V}_{n+1}(X)
\]
whose properties it is important to understand in order to completely classify vector bundles on $X$. It is straightforward to check that the limit term (or stable term) $\mathcal V(X)=\lim_{n\in\N}\mathcal{V}_n(X)$ is in bijection with the reduced Grothendieck group $\widetilde{\mathrm{K}}_0(X)=\ker\left( \mathrm{rk}:\mathrm{K}_0(X)\to \Z\right)$ which is in principle computable using the techniques of algebraic $K$-theory. Early results in this field of study aimed at comparing the unstable terms $\mathcal{V}_n(X)$ with the stable one. A famous theorem of Serre states that the map $s_n$ is onto provided $n$ is at least the (Krull) dimension of $X$, while Bass' cancellation theorem states that $s_n$ is injective provided $n\geq \mathrm{dim}(X)+1$. Consequently, the set $\mathcal{V}_n(X)$ is already stable (and thus computable in principle) for such an integer $n$. These two fundamental results are the best possible in general, as well-known and easy examples show. The situation improves however if the base field is supposed to have extra (cohomological) properties. Suslin demonstrated first in \cite{Suslin77c} that the map $s_n$ has trivial fiber (i.e. $s_n^{-1}(\{\OO_X^{n+1}\})=\{\OO_X^n\}$) if $n=\mathrm{dim}(X)$ and $k$ is algebraically closed, before proving shortly after that in fact $s_n$ is injective (\cite{Suslin77}).  Slightly later, he managed to show that $s_n$ has trivial fiber provided $X$ is normal and defined over a field $k$ of cohomological dimension at most $1$ with $n!\in k^\times$ (\cite{Suslin82}), and Bhatwadekar deduced from his result that $s_n$ is indeed injective under the more restrictive assumption that $k$ is of characteristic $0$ and $C_1$  (\cite{Bhatwadekar03}). This string of results raised the question to know if $s_n$ was also injective for $n<\mathrm{dim}(X)$. In fact, Suslin wrote in \cite{Suslin80} that the correct bound should be $n\geq \frac{\mathrm{dim}(X)+1}2$, probably based on analogy with the topological situation. This question was quickly addressed by Mohan Kumar who produced examples in \cite{Mohan85} of  smooth rational varieties $X$ over algebraically closed fields and non trivial vector bundles of rank $\mathrm{dim}(X)-2\geq 2$ in the fiber of $s_{d-2}$, thus proving that the map is not injective in general and leaving only the following question open.

\begin{conjintro}[Suslin's cancellation conjecture]\label{conj:Suslin}
Let $X$ be an affine variety of dimension $d\geq 3$ over an algebraically closed field $k$. Then, the map
\[
s_{d-1}:\mathcal{V}_{d-1}(X)\to \mathcal{V}_{d}(X)
\]
is injective. In other words, two vector bundles $\mathcal{E}$ and $\mathcal{F}$ of rank $d-1$ are stably isomorphic if and only if they are isomorphic.
\end{conjintro}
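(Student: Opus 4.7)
The plan is to translate the conjecture into motivic homotopy theory and attack it via the Moore--Postnikov analysis of the stabilization map $B\GL_{d-1}\to B\GL_d$. By Morel's $\aone$-representability theorem, for $X$ smooth affine over a perfect field one has a natural bijection $\mathcal{V}_n(X) \cong [X, B\GL_n]_{\aone}$, and $s_{d-1}$ is the map induced by the standard inclusion. It therefore suffices to prove the injectivity of $[X, B\GL_{d-1}]_{\aone} \to [X, B\GL_d]_{\aone}$ when $X$ is smooth affine of dimension $d\ge 3$ over an algebraically closed field with $d!\in k^\times$.

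The homotopy fiber of $B\GL_{d-1}\to B\GL_d$ is $\GL_d/\GL_{d-1}$, which is $\aone$-equivalent to $\aone^d\smallsetminus\{0\}$, and by Morel's theorem its first non-vanishing $\aone$-homotopy sheaf is $\piaone_{d-1}(\aone^d\smallsetminus\{0\}) = \KMW_d$. The Puppe sequence of pointed sets identifies the preimage under $s_{d-1}$ of the class of $\mathcal{E}\oplus\OO_X$ with the quotient of $[X,\aone^d\smallsetminus\{0\}]_{\aone}$ (suitably twisted by $\det\mathcal{E}$) by an action of $\mathrm{Aut}(\mathcal{E}\oplus\OO_X)$. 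For $d\ge 3$ the dimension of $X$ sits in the stable range of the Postnikov tower of $\aone^d\smallsetminus\{0\}$, and obstruction theory supplies a canonical bijection $[X,\aone^d\smallsetminus\{0\}]_{\aone} \cong H^{d-1}(X, \KMW_d(\det\mathcal{E}^{\vee}))$. The cancellation conjecture thus reduces to proving the transitivity of the $\mathrm{Aut}(\mathcal{E}\oplus\OO_X)$-action on this Nisnevich cohomology group.

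The strategy I would follow for this transitivity is to exhibit enough elementary transvections in $\mathrm{Aut}(\mathcal{E}\oplus\OO_X)$ to generate every class. On the Gersten resolution of $\KMW_d$, a transvection of the form $\Id+\alpha\cdot e_{ij}$ modifies a Gersten representative at a codimension-$(d-1)$ point $x\in X^{(d-1)}$ by multiplication with a Milnor--Witt symbol $\langle\alpha\rangle\in K^{MW}_1(\kappa(x))$. Since $k=\bar k$, the residue fields $\kappa(x)$ have transcendence degree one over $k$ and their Milnor--Witt $K$-theory admits Bass--Tate-type presentations; combining this with $d!\in k^\times$, which splits $\KMW_d$ rationally into its Chow and fundamental-ideal summands and neutralises the residual $2$-torsion, one should show that the transvections exhaust $H^{d-1}(X, \KMW_d(\L))$.

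The step I expect to be the main obstacle is precisely this transitivity: while the motivic reduction and the identification of the fiber as a single cohomology group are essentially formal consequences of Morel's and Asok--Fasel's machinery, realizing an arbitrary class in $H^{d-1}(X, \KMW_d(\L))$ as coming from a global automorphism of $\mathcal{E}\oplus\OO_X$ requires a delicate orbit analysis combining the Gersten complex, the algebraic closure of $k$, and the invertibility of $d!$. The borderline case $d=3$ will likely require separate treatment because it lies at the very edge of the stable range, so that the second $\aone$-homotopy sheaf of $\aone^3\smallsetminus\{0\}$ could in principle contribute to the fiber and one must verify, via Morel's computations involving the motivic Hopf map, that it does not.
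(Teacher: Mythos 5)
Your opening moves (representability, reduction to the stabilization map of classifying spaces, identification of the homotopy fiber with $\A^d\setminus 0$ and of its first nontrivial homotopy sheaf with $\KMW_d$) agree with the paper, but there is a genuine gap at the central step. You claim that for $\dim X=d$ one has a canonical bijection $[X,\A^d\setminus 0]_{\aone}\simeq \mathrm{H}^{d-1}(X,\KMW_d(\det\mathcal{E}^{\vee}))$ because $X$ ``sits in the stable range.'' It does not: $\A^d\setminus 0$ is $\A^1$-$(d-2)$-connected, so that bijection is a formal consequence of obstruction theory only for $\dim X\leq d-1$. For $\dim X=d$ the next Postnikov stage contributes, and one only gets an exact sequence
\[
\mathrm{H}_{\Nis}^{d-2}(X,\KMW_{d})\to \mathrm{H}_{\Nis}^{d}(X,\piaone_{d}(\A^d\setminus 0))\to [X,\A^d\setminus 0]_{\aone}\to  \mathrm{H}_{\Nis}^{d-1}(X,\KMW_d)\to 0;
\]
likewise the Moore--Postnikov count of liftings of $X\to \mathrm{BGL}_d$ to $\mathrm{BGL}_{d-1}$ involves $\mathrm{H}^d_{\Nis}(X,\piaone_d(\A^d\setminus 0))$, whose coefficient sheaf is not known (its structure is only conjectural). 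Your bijection is therefore equivalent to the vanishing of this group, which is precisely the main technical theorem of the paper and occupies two full sections: the group is shown to be $d\cdot(d-1)!^2$-torsion (via Suslin matrices and the factorization of $u_d$ through $\Omega_{\pone}^{-d}O$) and divisible prime to $\mathrm{char}(k)$ (via the divisibility of $\mathrm{Um}_d(X)/\mathrm{E}_d(X)$, i.e.\ the earlier ``trivial fiber'' results, together with the unique divisibility of $\mathrm{H}^{d-1}_{\Nis}(X,\KM_d)$). You flag a possible contribution of the second homotopy sheaf only in the ``borderline case $d=3$,'' but the situation is identical for every $d$: the connectivity of the sphere and the dimension of $X$ grow in lockstep, so one is at the edge of the stable range for all $d$.

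A second, lesser, problem is the transitivity step. You propose to generate $\mathrm{H}^{d-1}(X,\KMW_d(\L))$ by transvections through a Gersten-complex computation, but this is left entirely open and is not how the paper argues. The paper computes the connecting map $\Delta:[X,\mathrm{GL}]_{\aone}\to \mathrm{H}^{d-1}_{\Nis}(X,\KM_d)$ explicitly via Chern classes, shows using Suslin matrices that its image contains $(d-1)!$ times every class, and concludes by the unique $p$-divisibility of $\mathrm{H}^{d-1}_{\Nis}(X,\KM_d)$ for $p\neq \mathrm{char}(k)$ --- itself a nontrivial input coming from the Bloch--Ogus spectral sequence and Suslin's rigidity theorem, not from Bass--Tate presentations or a rational splitting of $\KMW_d$. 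As written, your proposal assumes the hardest result of the paper and leaves the surjectivity argument as an unexecuted sketch.
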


Some 30 years after Mohan Kumar's example, a first general positive result was obtained in \cite{Fasel09b} where it was shown that $s_2$ has trivial fiber provided $X$ is a smooth threefold and $k$ is of characteristic different from $2$. A bit later, this result was extended in \cite{Fasel10b} to all integers $d\geq 3$ provided $(d-1)!\in k^\times$ and $X$ is normal if $d\geq 4$ (in case $d=3$ smoothness is still required). Drawing on experiences from the past, it was expected at that time that the triviality of the fiber of $s_{d-1}$ was the main step towards a positive answer to Suslin's conjecture. However, the techniques developed in \cite{Suslin77}, \cite{Bhatwadekar03} or \cite{Keshari09} don't seem to be easily adaptable to this more complicated situation, and Suslin's cancellation conjecture has remained open since then. Over the years, it has become clear that new ideas and technical inputs were needed to settle the question. A promising set of results were recently obtained using the motivic homotopy category $\mathcal{H}(k)$ of Morel-Voevodsky (\cite{Morel99}), which allows to import many techniques of algebraic topology in algebraic geometry.  Using this extended framework and cohomological computations, a complete classification of vector bundles on smooth affine threefolds over an algebraically closed field $k$ of characteristic different from $2$ was obtained in \cite{Asok12a}, settling Suslin's conjecture in that case as a consequence. Following this train of thought, Du recently adapted techniques from algebraic topology and proved Suslin's conjecture for oriented bundles in \cite{Du20} assuming a deep conjecture on the second nontrivial homotopy sheaf of $\mathbb{A}^d\setminus 0$, while Syed obtained positive results in \cite{Syed21} for fourfolds. The main object of this paper is to prove Conjecture \ref{conj:Suslin} in the smooth case under some additional mild hypotheses.

\begin{thmintro}\label{thmintro:main}
Let $X$ be a smooth affine scheme of dimension $d\geq 3$ over an algebraically closed field $k$ such that $d!\in k^\times$. Then, the map
\[
s_{d-1}:\mathcal{V}_{d-1}(X)\to \mathcal{V}_d(X)
\]
is injective, i.e. two vector bundles $\mathcal{E}$ and $\mathcal{F}$ are stably isomorphic if and only if they are isomorphic.
\end{thmintro}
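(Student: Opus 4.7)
The plan is to translate Suslin's cancellation conjecture into a lifting problem in the Morel--Voevodsky motivic homotopy category $\hop{k}$ and analyze it via the Moore--Postnikov tower of the stabilization map $B\GL_{d-1}\to B\GL_d$. By Morel's representability theorem, for $X$ smooth affine and $n\geq 2$ one has a bijection $\mathcal{V}_n(X)\iso [X,B\GL_n]_{\aone}$, and $s_n$ is induced by the canonical map $B\GL_n\to B\GL_{n+1}$ whose homotopy fiber in $\hop{k}$ is $\A^{n+1}\setminus 0$. Injectivity of $s_{d-1}$ thus amounts to showing that two lifts of a map $X\to B\GL_d$ to $B\GL_{d-1}$ are $\aone$-homotopic; equivalently, that the natural action of $[X,\A^d\setminus 0]_{\aone}$ on the fiber of $s_{d-1}$ over a fixed stable class is trivial.

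The key input is Morel's computation $\bpia_{d-1}(\A^d\setminus 0)\iso\KMW_d$. For $X$ smooth of dimension $d$ only two layers of the Postnikov tower of $\A^d\setminus 0$ can contribute to $[X,\A^d\setminus 0]_{\aone}$: a primary piece controlled by $H^{d-1}(X,\KMW_d)$ and a secondary piece controlled by $H^d(X,\bpia_d(\A^d\setminus 0))$. The proof therefore splits into showing that the action of each of these groups on the fiber of $s_{d-1}$ is trivial.

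For the primary piece, my approach would be to extend the Chow--Witt methods of \cite{Fasel09b,Fasel10b}, which already established triviality of the fiber of $s_{d-1}$ over the free bundle. Smoothness of $X$ makes available Gersten-type resolutions of $\KMW_d$ and hence a concrete description of $H^{d-1}(X,\KMW_d)$; algebraic closure of $k$ forces residue contributions from function fields to degenerate in top degrees, and $d!\in k^{\times}$ eliminates small-torsion complications from transfers and symmetric-group actions on Milnor--Witt K-theory. I would then identify the action of this group on the fiber with a suitable Euler-class-type difference and show it must vanish in this setting.

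The main obstacle, anticipated from the history of the problem, is controlling the secondary contribution coming from $\bpia_d(\A^d\setminus 0)$. The full structure of this sheaf is the object of a notoriously difficult conjecture assumed by Du in the oriented case \cite{Du20}, and a complete determination seems out of reach. The most promising route is to bypass full computation by identifying a manageable quotient of $\bpia_d(\A^d\setminus 0)$ that actually acts on the fiber of $s_{d-1}$ and then showing that its contribution to $H^d(X,\bpia_d(\A^d\setminus 0))$ vanishes on a smooth affine $d$-fold over an algebraically closed field, exploiting the factorial hypothesis to kill torsion and using the geometric structure of $\A^d\setminus 0$ (in particular through its motivic Hopf map) to reduce the relevant cohomology to vanishing terms. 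This secondary analysis is expected to be the most technically demanding part of the argument and the source of the genuinely new input beyond \cite{Fasel09b,Fasel10b,Du20}.
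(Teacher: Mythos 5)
Your global framework is the same as the paper's: represent $\mathcal{V}_n(X)$ by $[X,\mathrm{BGL}_n]_{\aone}$, run the Moore--Postnikov tower of the stabilization map, and observe that for $\dim X=d$ only two layers matter, a primary one governed by $\mathrm{H}^{d-1}_{\Nis}(X,\KM_d)$ and a secondary one governed by $\mathrm{H}^{d}_{\Nis}(X,\piaone_d(\A^d\setminus 0))$. But at both layers your proposal stops short of the argument that actually closes the gap, and at the secondary layer the strategy you outline cannot work as stated.

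For the secondary layer you propose to ``exploit the factorial hypothesis to kill torsion.'' Torsion alone proves nothing: a $d!$-torsion group need not vanish. The paper's key technical result is the outright vanishing $\mathrm{H}^{d}_{\Nis}(X,\piaone_d(\A^d\setminus 0))=0$, and it is obtained by proving the group is simultaneously $d\cdot(d-1)!^2$-torsion \emph{and} $d\cdot(d-1)!^2$-divisible. The torsion half does use Suslin matrices (the factorization $r_d\circ u_d'=\mu_{(d-1)!}$ on $\A^d\setminus 0$ together with the factorization of $u_d$ through the orthogonal group and the vanishing of $\mathrm{H}^d(X,\mathbf{GW}^d_{d+1})$), roughly in the spirit you suggest. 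The divisibility half, which your proposal omits entirely, is the genuinely new input: the second stage of the Postnikov tower of $\A^d\setminus 0$ itself gives an exact sequence
\[
\mathrm{H}_{\Nis}^{d-2}(X,\KMW_{d})\to \mathrm{H}_{\Nis}^{d}(X,\piaone_{d}(\A^d\setminus 0))\to [X,\A^d\setminus 0]_{\aone}\to  \mathrm{H}_{\Nis}^{d-1}(X,\KMW_d)\to 0,
\]
and one feeds in the identification $[X,\A^d\setminus 0]_{\aone}=\mathrm{Um}_d(X)/\mathrm{E}_d(X)$ together with the divisibility of that group (the earlier ``trivial fiber'' theorem), plus unique divisibility of $\mathrm{H}^{d-1}_{\Nis}(X,\KM_d)$ and divisibility of $\mathrm{H}^{d-2}_{\Nis}(X,\KM_d)$ proved by a Bloch--Ogus argument. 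Without some substitute for this divisibility statement your secondary analysis has no way to conclude. At the primary layer, the mechanism is also not an ``Euler-class-type difference'' vanishing: one shows that the stabilizer homomorphism $\Delta:[X,\mathrm{GL}]_{\aone}\to\mathrm{H}^{d-1}_{\Nis}(X,\KM_d)$, computed explicitly via Chern classes as in Du's work, is surjective, because precomposition with the Suslin matrix $S_d$ yields $(d-1)!$ times the relevant $k$-invariant and the target is uniquely divisible prime to the characteristic. You should make both of these steps explicit; as written, the proposal identifies the right obstruction groups but does not contain the argument that they act trivially.
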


Let us now explain the main ideas of the proof. As hinted above, we use the pointed motivic homotopy category $\mathcal{H}(k)$ of Morel-Voevodsky. By results of Morel, Schlichting and Asok-Hoyois-Wendt (\cite{Morel08}, \cite{Schlichting15} and \cite{Asok15a}), we know that the classifying space $\mathrm{BGL}_{n}$ classifies rank $n$ vector bundles for any $n\in\N$, in the sense that there is a functorial bijection of pointed sets
\[
[X_+,\mathrm{BGL}_{n}]_{\A^1}\simeq \mathcal{V}_{n}(X), \text{ $X$ smooth affine.}
\]
The morphism $\mathrm{GL}_n\to \mathrm{GL}_{n+1}$ defined by $M\mapsto \mathrm{diag}(M,1)$ induces a morphism of spaces $\mathrm{BGL}_{n}\to \mathrm{BGL}_{n+1}$ yielding a map of pointed sets
\[
[X_+,\mathrm{BGL}_{n}]_{\A^1}\to [X_+,\mathrm{BGL}_{n+1}]_{\A^1}
\]
which, under the previous identification, is precisely $s_n$. If $X$ is smooth affine of dimension $d$ over $k=\overline k$, it follows from Suslin's cancellation theorem for rank $d$ bundles that $[X_+,\mathrm{BGL}_{d}]_{\A^1}\simeq [X_+,\mathrm{BGL}]_{\A^1}\simeq \mathcal{V}(X)$, and we are led to consider the morphism of spaces
\[
\mathrm{BGL}_{d-1}\to \mathrm{BGL}.
\]
We may then use the Moore-Postnikov tower of this morphism in $\mathcal{H}(k)$, which provides on the one hand a series of (motivic) obstructions to lift a morphism $X\to  \mathrm{BGL}$ to a morphism $X\to  \mathrm{BGL}_{d-1}$ and on the other hand a ``count'' of the possible liftings. These obstructions are constructed by considering suitable  cohomology groups of $X$ with coefficients in the (twisted) homotopy sheaves $\piaone_i(F_{d-1})$, where $F_{d-1}$ is the homotopy fiber of $\mathrm{BGL}_{d-1}\to \mathrm{BGL}$. In theory, this allows to completely settle the problem, but there are however some technicalities involved. First, a given vector bundle $\mathcal{E}$ of rank $d-1$ on $X$ provides a morphism $X\to \mathrm{BGL}_{d-1}$ whose composite $X\to \mathrm{BGL}_{d-1}\to  \mathrm{BGL}$ is not necessarily homotopy trivial: This is the case if and only if the vector bundle $\mathcal{E}$ is stably free. This forces to somehow change the base point of $[X_+,\mathrm{BGL}_{d-1}]$ and track the relevant changes in the obstructions provided by the Moore-Postnikov tower. This process is explained in \cite{Du20}, but we give a streamlined presentation in order to avoid non necessary hypotheses, such as the triviality of the determinant of $\mathcal{E}$. The second problem is a priori much more serious. Out of the two homotopy sheaves of $F_{d-1}$ needed in the process, we only know the first one! Using convenient fiber sequences and some cohomological vanishing statements due to the fact that the base field is algebraically closed, we are able to reduce to prove the following theorem, which is of independent interest.

\begin{thmintro}\label{thmintro:vanishing}
Let $X$ be a smooth affine scheme of dimension $d\geq 3$ over an algebraically closed field $k$ with $d!\in k^\times$. Then, 
\[
\mathrm{H}^d_{\mathrm{Nis}}(X,\piaone_d(\A^d\setminus 0))=\mathrm{H}^d_{\mathrm{Zar}}(X,\piaone_d(\A^d\setminus 0))=0.
\]
\end{thmintro}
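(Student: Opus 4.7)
The plan is to reduce the top cohomology to residue-field data via Morel's Gersten resolution for strictly $\aone$-invariant sheaves, bound the unknown sheaf $\piaone_d(\A^d\setminus 0)$ by a motivic fiber sequence whose other terms are cohomologically simpler, and then exploit the algebraic closure of $k$ together with $d!\in k^\times$ to force the remaining contributions to vanish.

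For the Gersten step, I would invoke Morel's theorem: for any strictly $\aone$-invariant sheaf $\mathcal F$ on $\Sm_k$ and any smooth $d$-dimensional $X$, the top Nisnevich cohomology is identified with a cokernel
\[
\mathrm{H}^d_{\Nis}(X,\mathcal F) = \coker\Bigl( \bigoplus_{x\in X^{(d-1)}} \mathcal F_{-(d-1)}(k(x)) \longrightarrow \bigoplus_{p\in X^{(d)}} \mathcal F_{-d}(k(p)) \Bigr),
\]
and moreover $\mathrm{H}^d_{\Nis}(X,\mathcal F) = \mathrm{H}^d_{\Zar}(X,\mathcal F)$, so the two assertions in the statement reduce to a single one. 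Because $k=\overline k$, every residue field at a closed point is $k$ itself, concentrating the entire computation on sections over $k$ and over function fields of codimension-one points.

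Next, because $\piaone_d(\A^d\setminus 0)$ is precisely the sheaf whose explicit structure is the subject of Du's conjecture, I would avoid identifying it directly and instead place it in a fiber sequence. Using the $\aone$-equivalence $\A^d\setminus 0 \simeq \Sigma^{d-1}\Gm^{\wedge d}$, which is $(d-2)$-connected, a motivic James-Hopf or EHP-type fibration produces an exact sequence of $\aone$-homotopy sheaves into which $\piaone_d(\A^d\setminus 0)$ fits; for $d\geq 3$ and after Freudenthal-type stabilization, the neighbouring terms are expressible in terms of $\KMW$-type sheaves (e.g.\ $\KMW_{2d-1}$ arising from $\piaone_{2d-2}(\A^{2d-1}\setminus 0)$) together with contributions from the first motivic stable stem, whose torsion is controlled by the small primes dividing $24$ (Rondigs-Spitzweck-Ostvaer).

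Finally, over $k=\overline k$ with $d!\in k^\times$ several simplifications occur: the element $\rho=[-1]\in \KMW_1(k)$ is zero since $-1$ is a square; the fundamental ideal $\mathbf I(k)$ vanishes because the Witt ring of $k$ is $\Z/2$, whence $\KMW_n(k)=\KM_n(k)$ for $n\geq 0$; and $\KM_n(k)$ is uniquely divisible by every integer invertible in $k$, killing all $m$-torsion for $m\mid d!$. These vanishings, once fed back into the Gersten cokernel of the first step and combined with a boundary-map analysis for the codimension-one contribution, force $\mathrm{H}^d(X,\piaone_d(\A^d\setminus 0))=0$.

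The hardest step is the second one: without any explicit description of $\piaone_d(\A^d\setminus 0)$, one must select a fiber sequence whose other terms are tractable, whose connecting morphisms propagate the vanishings through the Gersten cokernel, and which exists in precisely the range demanded by motivic Freudenthal. The hypothesis $d!\in k^\times$ enters exactly at this juncture, being the minimal assumption that simultaneously kills the small-prime torsion appearing in the first motivic stable stem and guarantees that the required stabilization holds in the bidegree $(d,d)$.
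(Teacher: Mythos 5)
Your proposal diverges from the paper's argument at its central step, and that step contains a genuine gap. The sheaf $\piaone_d(\A^d\setminus 0)$ is precisely the \emph{unknown} object here: it is the first homotopy sheaf above the connectivity bound, its structure is the subject of the open conjecture of Asok--Fasel recalled in the introduction, and the paper explicitly notes that its vanishing theorem neither uses nor implies that conjecture. Your second step --- ``a motivic James--Hopf or EHP-type fibration produces an exact sequence \dots after Freudenthal-type stabilization the neighbouring terms are expressible in terms of $\KMW$-type sheaves together with contributions from the first motivic stable stem'' --- is exactly the program that is not currently known to work. Simplicial Freudenthal does put $\piaone_d(\A^d\setminus 0)$ in the $S^1$-stable range for $d\geq 4$, but the identification with the first \emph{motivic} stable stem of R{\o}ndigs--Spitzweck--{\O}stv{\ae}r requires stabilization in the $\Gm$-direction as well, i.e.\ comparison of $\piaone_d(\A^d\setminus 0)$ with $\piaone_{d+1}(\A^{d+1}\setminus 0)$ after contraction, and that comparison in this bidegree is open. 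Without it, your fiber sequence has an unidentified term and the argument cannot close. A secondary gap: even granting the Gersten presentation of $\mathrm{H}^d_{\Nis}$ as a cokernel, vanishing is a statement about the \emph{image} of the codimension-$(d-1)$ boundary map, whose source involves contractions evaluated on function fields of curves (not on $k$); divisibility or torsion-freeness of the groups at closed points alone does not force the cokernel to vanish, and your ``boundary-map analysis'' is not supplied.

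For contrast, the paper never identifies $\piaone_d(\A^d\setminus 0)$ at all. It proves the group $\mathrm{H}^d_{\Nis}(X,\piaone_d(\A^d\setminus 0))$ is simultaneously $N$-torsion and $N$-divisible for $N=d\cdot(d-1)!^2$ prime to $\mathrm{char}(k)$, hence zero. The torsion half uses the Suslin matrices: the composite $\A^d\setminus 0\to GL_d\to \A^d\setminus 0$ is $\mu_{(d-1)!}$, hence multiplication by $(d-1)!$ on cohomology, combined with the factorization of $u_d$ through $\Omega^{-d}_{\pone}O$ and the vanishing $\mathrm{H}^d(X,\mathbf{GW}^d_{d+1})=0$, plus stabilization lemmas for $\piaone_d(GL_n)$. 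The divisibility half uses the second stage of the Postnikov tower of $\A^d\setminus 0$, which sandwiches $\mathrm{H}^d_{\Nis}(X,\piaone_d(\A^d\setminus 0))$ between $\mathrm{H}^{d-2}_{\Nis}(X,\KMW_d)$ and the kernel of $[X,\A^d\setminus 0]_{\aone}\to \mathrm{H}^{d-1}_{\Nis}(X,\KMW_d)$, and then invokes the divisibility of $\mathrm{Um}_d(X)/\mathrm{E}_d(X)$ from the earlier ``trivial fiber'' results together with Bloch--Ogus computations showing $\mathrm{H}^{d-1}_{\Nis}(X,\KM_d)$ is uniquely divisible. Neither of these two ingredients appears in your proposal, and without them (or a proof of the Asok--Fasel conjecture) the approach you outline does not yield the theorem.
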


We note here that this vanishing statement was expected in view of the conjecture on the structure of the sheaf $\piaone_d(\A^d\setminus 0)$ stated in \cite{Asok13b}. We also note that this result, though very useful in classification problems does \emph{not} imply the conjecture in \emph{loc. cit.} (which was recently addressed in characteristic zero in the preprint \cite{Asok23}). To prove the above theorem, we first observe that by general results of Morel (\cite[Corollary 5.43]{Morel08}), the cohomology groups computed either in the Zariski topology or in the Nisnevich topology coincide. We then show that $\mathrm{H}^d_{\mathrm{Nis}}(X,\piaone_d(\A^d\setminus 0))$ is $d!\cdot (d-1)!$-torsion using famous matrices defined by Suslin in \cite{Suslin77}, and conclude the proof by showing that it is also  $d!\cdot (d-1)!$-divisible, provided $d!\in k^\times$. The crucial input in our argument is the fact that the set $[X_+,\mathbb{A}^d\setminus 0]_{\A^1}$ is a group in that situation, and that this group is divisible by \cite{Fasel10b}. 

The organization of the paper is the following. In Section \ref{sec:preliminaries}, we gather the different ingredients needed in the proof of Suslin's cancellation theorem. This 
includes a (quick) review of the theory of fiber sequences in the motivic homotopy category including the Moore-Postnikov tower, some notions of cohomotopy and the representability statements we need.  In the next section, we use the Moore-Postnikov tower to prove the conjecture, assuming Theorem \ref{thmintro:vanishing}. We conclude this paper with Section \ref{sec:divisibility} where it is respectively shown that the group $\mathrm{H}^d_{\mathrm{Zar}}(X,\piaone_d(\A^d\setminus 0))$ is divisible and torsion, proving Theorem \ref{thmintro:vanishing}.

\subsubsection*{Related works}

There is ongoing work of R. Rao, R. Swan and the author to settle Suslin's conjecture when $X$ is \emph{singular}. At the moment, we believe that we know how to prove it over a singular threefold over an algebraically closed field of characteristic not $2$, but we don't know yet how to extend the result to higher dimensions. We point out that the methods are very different from the ones used in the present article. Details will hopefully appear elsewhere.

\subsubsection*{Acknowledgements}
It is a pleasure to thank Peng Du, Niels Feld and Tariq Syed for useful discussions and remarks. I also warmly thank R. A. Rao and A. Asok for many formative discussions on Suslin's cancellation conjecture. I'm indebted to the referees for many useful comments that allowed to make this paper much more precise.

\section{Preliminaries}\label{sec:preliminaries}
Fix a base field $k$, which will always be assumed to be infinite perfect (or even algebraically closed). We will denote by $\mathcal H(k)$ the motivic homotopy category of \emph{pointed} spaces. The reader may consult \cite[\S 2]{Asok12a} for some preliminaries, but we gather a few fundamental facts for convenience. This category is the homotopy category of the category of pointed spaces, i.e. pointed simplicial presheaves on the category $\mathrm{Sm}_k$ of smooth (separated) $k$-schemes, endowed with a convenient model structure, expressed in \cite[\S 3]{Morel99}. The terms fibrant, connected, etc. always refer to this model structure. Given two pointed spaces $F$ and $G$ in $\mathcal H(k)$, we usually denote by $[F,G]_{\A^1}$ the set of morphisms in $\mathcal H(k)$. There are two types of spheres in $\mathcal H(k)$, the simplicial sphere $S^1$ and $\A^1\setminus 0$, pointed by $1$, which is usually denoted by $\Gm$. Given a pointed object $F$, one may consider its homotopy sheaves $\piaone_{i,j}(F)$, which are the Nisnevich sheaves associated to the presheaf
\[
U\mapsto [S^i\wedge (\Gm)^{\wedge j}\wedge U_+,F]_{\A^1}.
\]
A nice feature about these homotopy sheaves is that $\piaone_{i,j}$ can be computed purely in terms of $\piaone_{i}:=\piaone_{i,0}$ \cite[Theorem 6.13]{Morel08}. Indeed, there is a canonical identification
\[
\piaone_{i,j}(F)=\piaone_i(F)_{-j}
\]
where $(-)_{-j}$ denotes the $j$-th contraction described in \cite[Remark 2.23]{Morel08}.

\subsection{Fiber sequences}\label{subsec:fibre}

Let 
\begin{equation}\label{eqn:fibersequence}
(F,f)\xrightarrow{p} (E,e)\xrightarrow{q} (B,b)
\end{equation}
be a fiber sequence of pointed spaces. We suppose that $B$ is fibrant and that $q$ is a fibration. For any pointed space $\mathscr{X}$, we obtain a sequence
\[
[\mathscr{X},\Omega(E,e)]_{\aone}\to [\mathscr{X},\Omega (B,b)]_{\aone}\to [\mathscr{X},F]_{\aone}\xrightarrow{p_*}[\mathscr{X},E]_{\aone}\xrightarrow{q_*} [\mathscr{X},B]_{\aone}
\]
which is exact in the usual sense. The relevant point in this article is exactness at $[\mathscr{X},F]_{\aone}$: If $\alpha$ and $\alpha^\prime$ are such that $p_*\alpha=p_*\alpha^\prime$, then there exists an element $g\in  [\mathscr{X},\Omega (B,b)]_{\aone}$ such that $g\alpha=\alpha^\prime$. Besides, the image of 
$[\mathscr{X},\Omega(E,e)]_{\aone}$ under the group homomorphism
\[
[\mathscr{X},\Omega(E,e)]_{\aone}\to [\mathscr{X},\Omega (B,b)]_{\aone}
\] 
is the stabilizer of the base point in $[\mathscr{X},F]_{\aone}$, or more precisely the stabilizer of the homotopy class of the composite $\mathscr{X}\to \ast\xrightarrow{f} F$. It follows in particular that there is a bijection
\[
(p_*)^{-1}(\ast)\simeq [\mathscr{X},\Omega (B,b)]_{\aone}/\mathrm{Im}([\mathscr{X},\Omega(E,e)]_{\aone}),
\]
where the right-hand term is the set of orbits under the action of $\mathrm{Im}([\mathscr{X},\Omega(E,e)]_{\aone})$.
A priori, the above exact sequence doesn't give any information about the stabilizer of an arbitrary $\alpha\in [\mathscr{X},F]_{\aone}$ but the situation slightly improves in an important case that we now explain. 

Suppose that $(E,e)$ is an abelian group object in $\mathcal H(k)$. In other words, there exists a map
\[
m:(E,e)\times (E,e)\to (E,e)
\] 
satisfying the usual properties, and turning the set $[\mathscr{X},(E,e)]_{\aone}$ into an abelian group functorially in $\mathscr{X}$. The fiber sequence \eqref{eqn:fibersequence} yields a fiber sequence of simplicial sets (\cite[Corollary 6.4.2]{Hovey99})
\[
\mathrm{RMap}_\ast(\mathscr{X},(F,f))\xrightarrow{p_*} \mathrm{RMap}_\ast(\mathscr{X},(E,e)) \xrightarrow{q_*} \mathrm{RMap}_\ast(\mathscr{X},(B,b))
\]
where $\mathrm{RMap}_\ast(-,-)$ is the derived (pointed) mapping space. Since the spaces $F,E$ and $B$ are supposed to be fibrant and $\mathscr{X}$ is cofibrant, this space is just the simplicial function object considered for instance in \cite[\S 2.1]{Morel99}. 
Let $\gamma\in \mathrm{RMap}_0(\mathscr{X},(F,f))$. We may consider $\mathrm{RMap}_{\ast}(\mathscr{X},(F,f))$ as pointed by $\gamma$ and $\mathrm{RMap}_{\ast}(\mathscr{X},(E,e))$ as pointed by $p_*(\gamma)$, obtaining a fiber sequence
\[
(\mathrm{RMap}_\ast(\mathscr{X},(F,f)),\gamma)\xrightarrow{p_*} (\mathrm{RMap}_\ast(\mathscr{X},(E,e)),p_*(\gamma)) \xrightarrow{q_*} \mathrm{RMap}_\ast(\mathscr{X},(B,b))
\]
and then an exact sequence 
\[
\xymatrix@C=0.8em{\pi_1(\mathrm{RMap}_\ast(\mathscr{X},(E,e)),p_*(\gamma)) \ar[r]^-{q_*} & \pi_1\mathrm{RMap}_\ast(\mathscr{X},(B,b))\ar[r] &  \pi_0(\mathrm{RMap}_\ast(\mathscr{X},(F,f)),\gamma)\ar[d]^-{p_*} \\
& &   \pi_0(\mathrm{RMap}_\ast(\mathscr{X},(E,e)),p_*(\gamma))}
\]
As $(E,e)$ is an abelian group object, we see that we may translate along $p_*(\gamma)$ and obtain an isomorphism
\[
\pi_1(\mathrm{RMap}_\ast(\mathscr{X},(E,e)),\ast)\to \pi_1(\mathrm{RMap}_\ast(\mathscr{X},(E,e)),p_*(\gamma))
\] 
If we denote by $q_*^\gamma$ the composite of this isomorphism with $q_*$, we obtain a bijection
\[
(p_*)^{-1}(p_*\gamma)=\pi_1\mathrm{RMap}_\ast(\mathscr{X},(B,b))/q_*^\gamma\left(\pi_1(\mathrm{RMap}_\ast(\mathscr{X},(E,e)),\ast)\right) 
\]
The problem is of course to compute $q_*^\gamma$.  

\subsection{The Moore-Postnikov tower}\label{sec:MoorePostnikov}

In this section, we briefly recall the formalism of the Moore-Postnikov tower associated to a fibration (see for instance \cite[Chapter VI]{Goerss09}, or \cite[\S 6.1]{Asok12c}). The output of this tower is that morphisms between spaces can be computed using cohomology groups of specific homotopy sheaves. To introduce the theorem, recall first that if $\mathbf{A}$ is a strictly $\A^1$-invariant sheaf \cite[Definition 1.7]{Morel08}, one can define for any integer $n\in\N$ the so-called Eilenberg MacLane space $\mathrm{K}(\mathbf{A},n)$, which has the property that 
\[
[X_+,\mathrm{K}(\mathbf{A},n)]_{\A^1}=\mathrm{H}^n_{\Nis}(X,\mathbf{A}).
\]
If furthermore $\mathbf{G}$ is a strictly $\A^1$-invariant sheaf of (abelian) groups which acts linearly on $\mathbf{A}$ (the assumption that $\mathbf{G}$ is abelian is not necessary, but will be sufficient for our purpose), one may consider instead the \emph{twisted} Eilenberg-MacLane space $\mathrm{K}^{\mathbf{G}}(\mathbf{A},n)$, which has the property to represent $\mathbf{G}$-equivariant cohomology. This space can concretely be constructed by setting $\mathrm{K}^{\mathbf{G}}(\mathbf{A},n):=E\mathbf{G}\times^{\mathbf{G}} \mathrm{K}(\mathbf{A},n)$, where $E\mathbf{G}$ is a simplicially contractible space endowed with a free action of $\mathbf{G}$. Using this model, we see that the structural map $\mathrm{K}(\mathbf{A},n)\to \ast$ yields a map $\pi:\mathrm{K}^{\mathbf{G}}(\mathbf{A},n)\to \mathrm{B}\mathbf{G}$ to the classifying space, while the inclusion of the base point gives a morphism $\mathrm{B}\mathbf{G}\to \mathrm{K}^{\mathbf{G}}(\mathbf{A},n)$.

Taking an appropriate fibrant model for $\mathrm{B}\mathbf{G}$, a (solid) map $\xi\colon X_+\to \mathrm{B}\mathbf{G}$ now corresponds to a $\mathbf{G}$-torsor $P$ on $X$, allowing to define a twisted version of the strictly $\A^1$-invariant sheaf $\mathbf{A}$ as follows: We consider the sheaf $\mathbf{A}(P)$ associated to the presheaf
\[
\{u:U\to X\}\mapsto \mathbf{A}(U)\otimes_{\Z[\mathbf{G}(U)]}\Z[(u^*P)(U)]
\]
where $\Z[\mathbf{G}(U)]$ is the group algebra of $\mathbf{G}(U)$, and $\Z[(u^*P)(U)]$ is the free abelian group on the $U$-points of $(u^*P)$ (i.e. the sections of $u^*P\to U$). Note that $\mathbf{A}(P)$ is \'etale locally isomorphic to $\piaone_n(F)$ (or even Zariski locally if the torsor $P$ is Zariski locally trivial), but in general not globally. This twisted sheaf allows to concretely understand the set $[X_+,\mathrm{K}^{\mathbf{G}}(\mathbf{A},n)]_{\xi,\A^1}$ of homotopy classes of maps $s\colon X_+\to \mathrm{K}^{\mathbf{G}}(\mathbf{A},n)$ which have the property that the composite
\[
X_+\xrightarrow{f} \mathrm{K}^{\mathbf{G}}(\mathbf{A},n)\xrightarrow{\pi}\mathrm{B}\mathbf{G}
\]
is homotopic to $\xi$. Forgetting the base point of $X_+$, and looking at the Cartesian square (in which we have taken an appropropriate model of $ \mathrm{K}^{\mathbf{G}}(\mathbf{A},n)$ to turn $\pi$ into a fibration)
\[
\xymatrix{X\times_{\mathrm{B}\mathbf{G}} \mathrm{K}^{\mathbf{G}}(\mathbf{A},n)\ar[r]\ar[d] &  \mathrm{K}^{\mathbf{G}}(\mathbf{A},n)\ar[d] \\
X\ar[r]_-{\xi} & \mathrm{B}\mathbf{G}, }
\]
we obtain that this is equivalent to understand the set of homotopy classes of sections of the projection $X\times_{\mathrm{B}\mathbf{G}} \mathrm{K}^{\mathbf{G}}(\mathbf{A},n)\to X$. Using now that $\mathrm{K}^{\mathbf{G}}(\mathbf{A},n)=E\mathbf{G}\times^{\mathbf{G}} \mathrm{K}(\mathbf{A},n)$, we see that this is tantamount to considering the set of  homotopy classes of $\mathbf{G}$-equivariant maps $P\to P\times \mathrm{K}(\mathbf{A},n)$ which are the identity on the first factor, a set that corresponds (taking quotients and using \cite[Lemma B.15]{Morel08}) to the group $\mathrm{H}_{\Nis}^n(X,\mathbf{A}(P))$.


We'll come back to this discussion below, but we first state the following theorem (\cite{Robinson} or \cite[\S6.1]{Asok12c}).

\begin{thm}\label{thm:MoorePostnikov}
Let 
\[
F\to E\xrightarrow{q} B
\] 
be a fiber sequence, where $E$ and $B$ are pointed fibrant $\A^1$-connected spaces and $F$ is $\A^1$-simply connected. Let $\mathbf{G}:=\piaone_1(B)$. Then, there exists for any $n\geq 0$ fibrant connected spaces $E_n$, together with morphisms $i_n:E\to E_n$, $p_n:E_n\to B$ and $q_n:E_n\to E_{n-1}$ fitting in a commutative diagram
\[
\xymatrix@C=4em{
	& & E \ar[d]_-{i_n} \ar[dl]_-{i_{n+1}}\ar[dr]^-{i_{n-1}}& & \\
	\cdots \ar[r] & E_{n+1}\ar[r]_-{q_{n+1}}\ar[dr]_-{p_{n+1}}& E_n \ar[d]_-{p_n} \ar[r]_-{q_n}& E_{n-1}\ar[dl]^-{p_{n-1}} \ar[r]& \cdots\\
	&&B && 
}
\]
satisfying the following properties:
\begin{enumerate}
\item The morphism $p_0:E_0\to B$ is a weak-equivalence.
\item For any $n\in\N$, we have $p_ni_n=f$.
\item The morphisms $q_n$ are fibrations for any $n\geq 1$ with homotopy fiber $\mathrm{K}(\piaone_n(F),n)$. Further, $q_n$ are twisted principal fibrations in the sense that there is a unique (up to homotopy) morphism 
\[
E_{n-1}\xrightarrow{k_n} \mathrm{K}^{\mathbf{G}}(\piaone_n(F),n+1)
\]
sitting in a homotopy Cartesian square
\begin{equation}\label{eqn:generic}
\xymatrix{E_n\ar[d]_-{q_n}\ar[r] & \mathrm{B}\mathbf{G}\ar[d] \\
E_{n-1}\ar[r]_-{k_n} & \mathrm{K}^{\mathbf{G}}(\piaone_n(F),n+1)}
\end{equation}
where the right-hand vertical map is induced by the inclusion of the base point in $\mathrm{K}^{\mathbf{G}}(\piaone_n(F),n+1)$.
\item The morphism $E\to \holim_nE_n$ induced by the morphisms $i_n$ is a weak-equivalence.
\end{enumerate}
\end{thm}

We note that the Moore-Postnikov factorization is functorial in the following sense. If we are given a commutative diagram
\[
\xymatrix{F\ar[r]\ar@{-->}[d]_-f & E\ar[r]^-q\ar[d] & B\ar[d] \\
F^\prime\ar[r] & E^\prime\ar[r]_-{q^\prime} & B^\prime}
\]
of spaces with induced morphism $f:F\to F^\prime$ on homotopy fibers, then we obtain a family of maps
\[
h_n:E_n\to E^\prime_n
\]
which make the relevant diagrams commutative. In our situation, all  the spaces $E,E',B,B'$ will have the same $\A^1$-fundamental group, so that the relevant Eilenberg- MacLane spaces will be twisted by the same sheaf of abelian groups $\mathbf{G}$.  

In this article, we will have to work intensively with diagrams of the form \eqref{eqn:generic} and we will have to identify the fibers of $q_{n}$ in a precise way.  In our framework, $F$ will be always $\A^1$-simply connected, and then $\piaone_n(F)$ will always be strictly $\A^1$-invariant. It follows that the sheaf $\piaone_n(F)(P)$ associated to any $\mathbf{G}$-torsor $P$ on a smooth scheme $X$ is well defined.

One useful way to think about the Moore-Postnikov factorization is as a tower of objects over $\mathrm{B}\mathbf{G}$. As $B$ is $\A^1$-connected, there is a canonical morphism $B\to \mathrm{B}\mathbf{G}$ (actually corresponding to the $k$-invariant $k_1$) and a pointed morphism $X_+\to B$ yields by composition a pointed morphism $X_+\to \mathrm{B}\mathbf{G}$ corresponding as above to a $\mathbf{G}$-torsor, which allows to pull-back the tower on $X_+$, and see all spaces as simplicial sheaves on this space. With this in mind, we can now prove the following lemma, in which the (derived) mapping spaces are thought of as (derived) mapping spaces of objects over $X_+$.

\begin{lem}\label{lem:homotopyfiber}
Let $q_n:E_n\to E_{n-1}$ be the morphism in Diagram \eqref{eqn:generic}, $X$ be a smooth $k$-scheme and let $b:X_+\to E_n$ be a morphism. Let $P$ be the $\mathbf{G}$-torsor induced by the composite $X_+\xrightarrow{b} E_n\to \mathrm{B}\mathbf{G}$. Then, the following diagram is homotopy Cartesian
\[
\xymatrix{\mathrm{RMap}_\ast(X_+,\mathrm{K}(\piaone_n(F)(P),n))\ar[r]\ar[d] & \ast\ar[d]^-{(q_n)_*b} \\
\mathrm{RMap}_\ast(X_+,E_n)\ar[r]_-{(q_n)_*} & \mathrm{RMap}_\ast(X_+,E_{n-1}).}
\]
In other words, the homotopy fiber of the morphism $X\xrightarrow{b} E_n\xrightarrow{q_n}E_{n-1}$ is the derived mapping space $\mathrm{RMap}_\ast(X_+,\mathrm{K}(\piaone_n(F)(P),n))$.
\end{lem}

\begin{proof}
The proof essentially follows \cite[\S 6]{Asok12a}. We start from the homotopy Cartesian square
\[
\xymatrix{E_n\ar[d]_-{q_n}\ar[r] & \mathrm{B}\mathbf{G}\ar[d] \\
E_{n-1}\ar[r]_-{k_n} & \mathrm{K}^{\mathbf{G}}(\piaone_n(F),n+1)}
\]
We may suppose that all morphisms are fibrations between fibrant objects. As $\mathrm{Map}_\ast(X_+,-)$ preserves limits, we obtain a Cartesian square
\[
\xymatrix{\mathrm{Map}_\ast(X_+,E_n)\ar[d]_-{(q_n)_*}\ar[r] & \mathrm{Map}_\ast(X_+,\mathrm{B}\mathbf{G})\ar[d] \\
\mathrm{Map}_\ast(X_+,E_{n-1})\ar[r]_-{(k_n)_*} & \mathrm{Map}_\ast(X_+,\mathrm{K}^{\mathbf{G}}(\piaone_n(F),n+1))}
\]
in which the maps are fibrations of simplicial sets, and we have to identify the fiber product $F_b$ sitting in the diagram
\[
\xymatrix{F_b\ar[r]\ar[d] & \mathrm{Map}_\ast(X_+,E_n)\ar[d]_-{(q_n)_*}\ar[r] & \mathrm{Map}_\ast(X_+,\mathrm{B}\mathbf{G})\ar[d] \\
\ast\ar[r]_-{(q_n)_*b} & \mathrm{Map}_\ast(X_+,E_{n-1})\ar[r]_-{(k_n)_*} & \mathrm{Map}_\ast(X_+,\mathrm{K}^{\mathbf{G}}(\piaone_n(F),n+1)).}
\]
Since the map $(k_n)_*(q_n)_*b$ factors through $\mathrm{Map}_\ast(X_+,\mathrm{B}\mathbf{G})$, we see that we are left to compute the fiber product
\[
\ast\times_{\mathrm{Map}_\ast(X_+,\mathrm{B}\mathbf{G})}\left(\mathrm{Map}_\ast(X_+,\mathrm{B}\mathbf{G})\times_{\mathrm{Map}_\ast(X_+,\mathrm{K}^{\mathbf{G}}(\piaone_n(F),n+1))} \mathrm{Map}_\ast(X_+,\mathrm{B}\mathbf{G}) \right)
\] 
The term in the parenthesis can be identified with $\mathrm{Map}_\ast(X_+,\mathrm{K}(\piaone_n(F)(P),n))$ (\cite[\S 6]{Asok12a} again) and the claim follows.
\end{proof}

It is usually quite difficult to identify the action of $\mathbf{G}$ on the homotopy sheaves $\piaone_i(F)$. Let us however give an example where this action is completely determined following \cite[\S 6.2]{Asok12c}. For $n\in\N$, we may consider the homomorphism
\[
\mathrm{GL}_n\to \mathrm{GL}_{n+1}
\]
mapping a matrix $M$ to the matrix $\mathrm{diag}(M,1)$. This induces a morphism $\mathrm{BGL}_n\to \mathrm{BGL}_{n+1}$ and in turn a fiber sequence (\cite[Theorem 8.12]{Morel08})
\[
\A^{n+1}\setminus 0\to \mathrm{BGL}_n\to \mathrm{BGL}_{n+1}
\]
The determinant map $\mathrm{BGL}_{n+1}\to \mathrm{B}\Gm$ induces an isomorphism $\piaone_1(\mathrm{BGL}_{n+1})\to \piaone_1(\mathrm{B}\Gm)\simeq \Gm$, and thus an action 
\[
\Gm\times (\A^{n+1}\setminus 0)\to \A^{n+1}\setminus 0
\]
which is given by $(\lambda,(x_1,\ldots,x_n))\mapsto (\lambda^{-1} x_1,x_2,\ldots,x_n)$. This action then yields an action of $\Gm$ on the homotopy sheaves $\piaone_i(\A^{n+1}\setminus 0)$ for any $i\in \N$. 

If $\mathbf{A}$ is a strictly $\A^1$-invariant sheaf,  and $L$ is a line bundle over a smooth scheme $X$, then the twisted sheaf $\mathbf{A}(L)$ has in general different cohomology groups than the untwisted sheaf. The next result however says that this is not the case in a specific situation that will be important for us.

\begin{lem}\label{lem:twistedcohomology}
Let $X$ be a smooth affine scheme of dimension $d$ over an algebraically closed field $k$. Let $\mathbf{A}$ be a strictly $\A^1$-invariant sheaf, and let $L$ be a line bundle over $X$. Then, we have a canonical isomorphism
\[
\mathrm{H}^d_{\Nis}(X,\mathbf{A})\simeq \mathrm{H}^d_{\Nis}(X,\mathbf{A}(L)).
\]
The same statement holds in the Zariski topology.
\end{lem}

\begin{proof}
If $d=0$, there is nothing to prove and we then suppose that $d\geq 1$. First, recall that the cohomology groups (either in the Nisnevich or Zariski topology) of a strictly $\A^1$-invariant sheaf $\mathbf{A}$ can be computed using the so-called Rost-Schmid complex (\cite[\S 5]{Morel08}), whose term in degree $j\in \N$ is of the form
\[
\bigoplus_{x\in X^{(j)}}\mathbf{A}_{-j}(k(x),\wedge^j(\mathfrak m_x/\mathfrak m_x^2)^*).
\]
The same holds for the sheaf $\mathbf{A}(L)$, at the difference that this time the term in degree $j$ is of the form
\[
\bigoplus_{x\in X^{(j)}}\mathbf{A}_{-j}(k(x),\wedge^j(\mathfrak m_x/\mathfrak m_x^2)^*\otimes_{k(x)}L(x)).
\]
If $x\in X^{(d)}$, then $k(x)=k$ is algebraically closed and if $l_1,l_2$ are two generators of $L(x)$, there exists $a\in k^\times $ such that $a^2l_1=l_2$. On the other hand, the choice of a generator induces an isomorphism
\[
\mathbf{A}_{-d}(k(x),\wedge^d(\mathfrak m_x/\mathfrak m_x^2)^*)\to \mathbf{A}_{-d}(k(x),\wedge^d(\mathfrak m_x/\mathfrak m_x^2)^*\otimes L(x)),
\]
and two generators differing by a square yield the same isomorphism by \cite[Lemma 3.49]{Morel08}. It follows that we obtain a canonical isomorphism 
\[
\bigoplus_{x\in X^{(d)}}\mathbf{A}_{-j}(k(x),\wedge^d(\mathfrak m_x/\mathfrak m_x^2)^*) \xrightarrow{can} \bigoplus_{x\in X^{(d)}}\mathbf{A}_{-d}(k(x),\wedge^j(\mathfrak m_x/\mathfrak m_x^2)^*\otimes_{k(x)}L(x)).
\]
Suppose first that $X$ is a curve, i.e. $d=1$. In that case, $\mathrm{Pic}(X)$ is $2$-divisible (since $X$ is affine, see \cite[Example 1.6.6]{Fulton98}) and it follows that $L$ is a square: There exists a line bundle $N$ and an isomorphism $N^{\otimes 2}\simeq L$. In that case, the Rost-Schmid complexes of the sheaves $\mathbf{A}$ and $\mathbf{A}(L)$ are canonically isomorphic by \cite[Remark 5.13]{Morel08} (observe that we don't need the strictly $\A^1$-invariant sheaf involved to be of the form $\mathbf{A}_{-1}$ because we don't need the transfer maps, as closed points have residue field equal to $k$) and we are done. For $d\geq 2$, we will show that the differential
\[
\bigoplus_{x\in X^{(d-1)}}\mathbf{A}_{-d+1}(k(x),\wedge^{d-1}(\mathfrak m_x/\mathfrak m_x^2)^*)\xrightarrow{\partial_X}\bigoplus_{x\in X^{(d)}}\mathbf{A}_{-d}(k(x),\wedge^j(\mathfrak m_x/\mathfrak m_x^2)^*)
\]
coincides with its twisted analogue. For that, it suffices to prove that this is the case for the differential
\[
\mathbf{A}_{-d+1}(k(x),\wedge^{d-1}(\mathfrak m_x/\mathfrak m_x^2)^*)\xrightarrow{\partial_X}\bigoplus_{x\in X^{(d)}}\mathbf{A}_{-d}(k(x),\wedge^j(\mathfrak m_x/\mathfrak m_x^2)^*)
\]
associated to any $x\in X^{(d-1)}$. Let $C$ be the normalization of $\overline{\{x\}}$ in the field $k(x)$. This is a smooth affine curve and the morphism $f\colon C\to X$ is finite. It follows from \cite[Corollary 5.30]{Morel08} that we have a commutative diagram
\[
\xymatrix{\mathbf{A}_{-d+1}(k(x),\omega(x))\ar[r]^-{\partial_C}\ar@{=}[d] & \bigoplus_{z\in C^{(1)}}\mathbf{A}_{-d}(k(z),(\mathfrak m_z/\mathfrak m_z^2)^*\otimes_{k(z)}\omega(z))\ar[d]^-{\sum}\\
\mathbf{A}_{-d+1}(k(x),\wedge^{d-1}(\mathfrak m_x/\mathfrak m_x^2)^*)\ar[r]_-{\partial_X} & \bigoplus_{x\in X^{(d)}}\mathbf{A}_{-d}(k(x),\wedge^j(\mathfrak m_x/\mathfrak m_x^2)^*)}
\]
where $\omega:=\omega_{C/k}\otimes f^*\omega_{X/k}^\vee$, and $\sum$ is just the homomorphism obtained out of the equalities $k(x)=k(z)=k$. The same applies with the twisted sheaf $\mathbf{A}(L)$, and we obtain a commutative diagram
\[
\xymatrix{\mathbf{A}_{-d+1}(k(x),\wedge^{d-1}(\mathfrak m_x/\mathfrak m_x^2)^*\otimes L(x))\ar[r]_-{\partial^L_X} & \bigoplus_{x\in X^{(d)}}\mathbf{A}_{-d}(k(x),\wedge^j(\mathfrak m_x/\mathfrak m_x^2)^*\otimes L(x))\\
\mathbf{A}_{-d+1}(k(x),\omega(x)\otimes L(x))\ar[r]^-{\partial^L_C}\ar@{=}[u] & \bigoplus_{z\in C^{(1)}}\mathbf{A}_{-d}(k(z),(\mathfrak m_z/\mathfrak m_z^2)^*\otimes_{k(z)}\omega(z)\otimes L(z))\ar[u]_-{\sum} \\
\mathbf{A}_{-d+1}(k(x),\omega(x))\ar[r]^-{\partial_C}\ar@{=}[d]\ar@{-->}[u] & \bigoplus_{z\in C^{(1)}}\mathbf{A}_{-d}(k(z),(\mathfrak m_z/\mathfrak m_z^2)^*\otimes_{k(z)}\omega(z))\ar[d]^-{\sum}\ar@{-->}[u]\\
\mathbf{A}_{-d+1}(k(x),\wedge^{d-1}(\mathfrak m_x/\mathfrak m_x^2)^*)\ar[r]_-{\partial_X} & \bigoplus_{x\in X^{(d)}}\mathbf{A}_{-d}(k(x),\wedge^j(\mathfrak m_x/\mathfrak m_x^2)^*)}
\]
where the dotted arrows (which are isomorphisms) are obtained using an orientation $N^{\otimes 2}\simeq L$ as above for some line bundle $N$ on $C$. To conclude, it suffices to observe that the diagram
\[
\xymatrix@C=1.2em{\bigoplus_{z\in C^{(1)}}\mathbf{A}_{-d}(k(z),(\mathfrak m_z/\mathfrak m_z^2)^*\otimes_{k(z)}\omega(z))\ar@{-->}[r]\ar[d]_-{\sum} & \bigoplus_{z\in C^{(1)}}\mathbf{A}_{-d}(k(z),(\mathfrak m_z/\mathfrak m_z^2)^*\otimes_{k(z)}\omega(z)\otimes L(z))\ar[d]^-\sum \\
\bigoplus_{x\in X^{(d)}}\mathbf{A}_{-d}(k(x),\wedge^j(\mathfrak m_x/\mathfrak m_x^2)^*)\ar[r]_-{can} &\bigoplus_{x\in X^{(d)}}\mathbf{A}_{-d}(k(x),\wedge^j(\mathfrak m_x/\mathfrak m_x^2)^*\otimes L(x)) }
\] 
commutes. Indeed, both are obtained using generators of $L$ at closed points, and the choice of such generators is irrelevant by the previous discussion. 
\end{proof}

\subsection{Motivic cohomotopy groups}

In this paper, we will also make use of the motivic analogue of the notion of cohomotopy groups in algebraic topology. We will mainly build on the following proposition (\cite[Proposition 1.2.1]{Asok21b}).

\begin{prop}\label{prop:cohomotopy}
Let $n\geq 2$ be an integer and let $(\mathscr{X},x)$ be a pointed space which is $\A^1-(n-1)$-connected. Suppose that $U$ is a smooth scheme which is of $\A^1$-cohomological dimension $\leq 2n-2$. Then, for any integer $i\geq 1$, the map
\[
[U_+,(\mathscr{X},x)]_{\A^1}\to [U_+,\Omega_{S^1}^i\Sigma_{S^1}^i(\mathscr{X},x)]_{\A^1}
\]
is a bijection. Consequently, $[U_+,(\mathscr{X},x)]_{\A^1}$ is endowed with the structure of an abelian group, functorial in both inputs.
\end{prop}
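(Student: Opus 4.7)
The plan is to combine the motivic Freudenthal suspension theorem with the Moore--Postnikov obstruction machinery recalled in Theorem~\ref{thm:MoorePostnikov}. Since $(\mathscr{X},x)$ is $\A^1$-$(n-1)$-connected with $n\geq 2$, the motivic Freudenthal theorem (which rests on Morel's strict $\A^1$-invariance results and a computation of the first nonzero homotopy sheaf of $\Sigma_{S^1}(\mathscr{X},x)\wedge(\mathscr{X},x)$) implies that the unit of the suspension--loop adjunction
\[
\eta:(\mathscr{X},x)\longrightarrow\Omega_{S^1}\Sigma_{S^1}(\mathscr{X},x)
\]
is an $\A^1$-$(2n-1)$-equivalence. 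Equivalently, its homotopy fiber $F_\eta$ is $\A^1$-$(2n-2)$-connected, i.e. $\piaone_j(F_\eta)=0$ for every $j\leq 2n-2$.

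Next, I would apply the Moore--Postnikov tower of $\eta$ to control the induced map $\eta_*\colon [U_+,(\mathscr{X},x)]_{\A^1}\to[U_+,\Omega_{S^1}\Sigma_{S^1}(\mathscr{X},x)]_{\A^1}$. At each stage, the obstructions to lifting a given class through the tower live in $\mathrm{H}^{j+1}(U,\piaone_j(F_\eta))$ with $j\geq 2n-1$, and the difference of two liftings is classified by $\mathrm{H}^{j}(U,\piaone_j(F_\eta))$ with $j\geq 2n-1$. The hypothesis that $U$ has $\A^1$-cohomological dimension $\leq 2n-2$ forces both families of groups to vanish, so $\eta_*$ is a bijection. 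For $i\geq 2$, one iterates: $\Omega_{S^1}\Sigma_{S^1}(\mathscr{X},x)$ is again $\A^1$-$(n-1)$-connected (with, in fact, higher connectivity coming from the Freudenthal improvement), so the same argument applies to the unit $\Omega_{S^1}^{i-1}\Sigma_{S^1}^{i-1}(\mathscr{X},x)\to\Omega_{S^1}^{i}\Sigma_{S^1}^{i}(\mathscr{X},x)$, and composing yields the bijectivity claim for arbitrary $i\geq 1$.

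For the group structure, I would take $i=2$: the space $\Omega_{S^1}^{2}\Sigma_{S^1}^{2}(\mathscr{X},x)$ is a double loop space and therefore an abelian group object in $\mathcal{H}(k)$, so $[U_+,\Omega_{S^1}^{2}\Sigma_{S^1}^{2}(\mathscr{X},x)]_{\A^1}$ carries a canonical abelian group structure, which transports along the bijection to $[U,\mathscr{X}]_{\A^1}=[U_+,(\mathscr{X},x)]_{\A^1}$. Functoriality in $U$ and in $(\mathscr{X},x)$ is immediate from the naturality of $\Omega_{S^1}^{i}\Sigma_{S^1}^{i}(-)$ and of the Moore--Postnikov construction.

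The main obstacle is genuinely the motivic Freudenthal statement, which is what pins down the precise connectivity $2n-1$ appearing in the hypothesis on $U$. Granting that input, the remainder of the argument is a textbook obstruction-theoretic calculation of the kind formalized in Section~\ref{subsec:fibre} and Theorem~\ref{thm:MoorePostnikov}; the only care needed is in checking that the Postnikov invariants for $F_\eta$ are indeed the strictly $\A^1$-invariant sheaves $\piaone_j(F_\eta)$ to which the cohomological dimension bound applies, and that the iteration preserves connectivity uniformly so that no additional hypotheses on $U$ appear at later stages.
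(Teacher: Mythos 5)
The paper does not prove this proposition at all: it is quoted verbatim from \cite[Proposition 1.2.1]{Asok21b}, so there is no in-text argument to compare against. Your reconstruction --- the $S^1$-Freudenthal suspension theorem making the unit $(\mathscr{X},x)\to\Omega_{S^1}\Sigma_{S^1}(\mathscr{X},x)$ an $\A^1$-$(2n-1)$-equivalence (fiber $\A^1$-$(2n-2)$-connected, with strictly $\A^1$-invariant homotopy sheaves since $2n-2\geq 2$), followed by Moore--Postnikov obstruction theory over a base of $\A^1$-cohomological dimension $\leq 2n-2$, with the connectivity estimate propagating through the iteration --- is correct and is essentially the argument of the cited reference.
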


\begin{rem}
In fact, the above result is better understood in the context of $S^1$-stabilization. In short, one may consider the homotopy category of $S^1$-spectra $\mathrm{SH}^{S^1}(k)$, which is a triangulated category (e.g. \cite[discussion before Proposition 1.2.2]{Asok21b}). There is an adjunction
\[
\Sigma_{S^1}^{\infty}:\mathcal{H}(k)\leftrightarrows\mathrm{SH}^{S^1}(k): \Omega_{S^1}^{\infty}
\]
and the above proposition states that the map
\[
[U_+,(\mathscr{X},x)]_{\A^1}\to [\Sigma_{S^1}^{\infty}(U_+),\Sigma_{S^1}^{\infty}(\mathscr{X},x)]_{\mathrm{SH}^{S^1}(k)}
\]
is a bijection under the stated hypotheses.
\end{rem}

It is in general not easy to prove that a space is suitably highly connected, but we now give an example which will be of central importance in this article. Let $d\geq 3$ be an integer, and consider the punctured affine space $\mathbb{A}^d\setminus 0$, pointed by $(0,\ldots,0,1)$. It is well known (see e.g. \cite[Example 2.20]{Morel99}) that there is an isomorphism in $\mathcal{H}(k)$ of the form 
\[
\mathbb{A}^d\setminus 0\simeq S^{d-1}\wedge (\Gm)^{\wedge d}.
\] 
We obtain as a consequence of \cite[Theorem 6.38]{Morel08} that $\mathbb{A}^d\setminus 0$ is $\A^1-(d-2)$-connected, i.e. that $\piaone_i(\mathbb{A}^d\setminus 0)=0$ for any $i\leq d-2$. Moreover, Morel was able to compute the first nontrivial homotopy sheaf of this space, proving that there is a canonical isomorphism of sheaves of abelian groups
\[
\piaone_{d-1}(\mathbb{A}^d\setminus 0)\simeq \KMW_d
\]
where the right-hand term is the so-called Milnor-Witt $K$-theory sheaf of weight $d$ (explicitly described in \cite[\S 3.2]{Morel08}). As stated in the preliminaries, this allows to obtain an isomorphism of sheaves of abelian groups
\begin{equation}\label{eqn:firstnontrivialsheafandcontraction}
\piaone_{d-1,i}(\mathbb{A}^d\setminus 0)\simeq \KMW_{d-i}
\end{equation}
for any $i\geq 0$, and in particular $\piaone_{d-1,d}(\mathbb{A}^d\setminus 0)\simeq \KMW_{0}$. Evaluating at $k$, we get a string of isomorphisms
\[
\KMW_{0}(k)\simeq \piaone_{d-1,d}(\mathbb{A}^d\setminus 0)(k)\simeq [\mathbb{A}^d\setminus 0,\mathbb{A}^d\setminus 0]_{\A^1}
\]
showing in particular that the right-hand term is the free $\KMW_{0}(k)$-module generated by the identity map.

As discussed in the previous section, there is an action of $\Gm$ on $\A^{d}\setminus 0$, and consequently an action on its homotopy sheaves. For $\piaone_{d-1}(\A^d\setminus 0)\simeq \KMW_d$, this action actually coincides with the ``obvious'' action of $\Gm$ induced by the map $\Gm\to \KMW_0$ and the multiplication map on Milnor-Witt $K$-theory (\cite[\S 6.2]{Asok12c}). 

Now, we note that $\mathbb{A}^d\setminus 0$ is of $\A^1$-cohomological dimension $d-1$ by \cite[Proposition 1.1.5]{Asok21b} and, since it is $\A^1-(d-2)$-connected as well by the above discussion, the set $[\A^d\setminus 0,\A^d\setminus 0]_{\A^1}$ is endowed with a group structure by Proposition \ref{prop:cohomotopy}. This group structure coincides with the one described above by \cite[Lemma 3.4]{Lerbet21}.

If $n\in\N$, we can consider the map of pointed spaces
\[
\mu_n:\A^d\setminus 0\to \A^d\setminus 0
\] 
defined by $(x_1,\ldots,x_d)\mapsto (x_1^n,x_2,\ldots,x_d)$. Since the cohomotopy group structure is functorial in the second input (for suitably connected spaces), the map $\mu_n$ induces a homomorphism of abelian groups
\[
(\mu_n)_*:[\mathscr{Y},\A^d\setminus 0]_{\A^1}\to [\mathscr{Y},\A^d\setminus 0]_{\A^1}
\]
for any space $\mathscr{Y}$ satisfying the assumptions of Proposition \ref{prop:cohomotopy}. If $\sqrt {-1}\in k$, it follows from \cite[Remark 6.11]{Du20} and the above discussion that this map is in fact the multiplication by $n$ map.

\begin{lem}\label{lem:multiplication}
Let $d\geq 3$ be an integer, and let $n,m$ be integers such that $2\leq m\leq 2d-4$. For any integer $i\geq 0$ and any smooth scheme $X$ over a field $k$ with $\sqrt {-1}\in k$, the map
\[
\mathrm{H}_{\Nis}^i(X,\piaone_m(\A^d\setminus 0))\to \mathrm{H}_{\Nis}^i(X,\piaone_m(\A^d\setminus 0))
\] 
induced by $\mu_n:\A^d\setminus 0\to \A^d\setminus 0$ is the multiplication by $n$ homomorphism.
\end{lem}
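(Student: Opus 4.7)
The plan is to reduce the statement to the sheaf level and then to sections over fields, where the preceding discussion already identifies $(\mu_n)_*$ with multiplication by $n$.

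First, I would note that $\piaone_m(\A^d\setminus 0)$ is a strictly $\aone$-invariant sheaf of abelian groups, by Morel's $\aone$-connectivity theorem together with the fact that $\A^d\setminus 0$ is $(d-2)$-connected and $m\geq 2$. The morphism $\mu_n$ induces a morphism of sheaves
\[
\piaone_m(\mu_n):\piaone_m(\A^d\setminus 0)\longrightarrow\piaone_m(\A^d\setminus 0).
\]
Since any endomorphism of a strictly $\aone$-invariant sheaf is determined by its values on sections over finitely generated field extensions $F/k$ (Morel's unramified sheaf theory), it suffices to show that $\piaone_m(\mu_n)(F)$ is multiplication by $n$ for every such $F$.

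Next, I would identify the sections as a cohomotopy group. By definition
\[
\piaone_m(\A^d\setminus 0)(\Spec F)=[S^m\wedge(\Spec F)_+,\A^d\setminus 0]_{\aone}.
\]
By Remark~\ref{rem:EckmannHilton}, the space $S^m\wedge(\Spec F)_+$ has $\aone$-cohomological dimension $m$. Since $\A^d\setminus 0$ is $\aone$-$(d-2)$-connected and $m\leq 2d-4=2(d-1)-2$, Proposition~\ref{prop:cohomotopy} applies with the choice $n=d-1$: the set $[S^m\wedge(\Spec F)_+,\A^d\setminus 0]_{\aone}$ acquires a well-defined abelian group structure functorial in both inputs, and it agrees with the usual one on the homotopy sheaf (as already noted in Remark~\ref{rem:EckmannHilton}). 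By functoriality in the target, $(\mu_n)_*$ is a group homomorphism on this cohomotopy group, and by the statement recalled just before the lemma (that is, \cite[Remark 6.12]{Du20} using $\sqrt{-1}\in k$), this homomorphism is multiplication by $n$.

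Combining these, the morphism of sheaves $\piaone_m(\mu_n)$ agrees with the multiplication-by-$n$ endomorphism on all sections over fields, hence equals multiplication by $n$ globally. Applying $\mathrm{H}^i(X,-)$ to an endomorphism that is multiplication by $n$ on the coefficient sheaf yields multiplication by $n$ on cohomology, which is the desired conclusion. The only subtle point is verifying the numerical hypothesis $m\leq 2d-4$ lines up with Proposition~\ref{prop:cohomotopy} applied to the $(d-2)$-connected space $\A^d\setminus 0$; this is precisely the role of the range assumption in the statement.
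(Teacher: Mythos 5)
Your proposal is correct and follows essentially the same route as the paper: both reduce the statement to sections over finitely generated field extensions (the paper via the Rost--Schmid complex, you via the fact that a morphism of strictly $\aone$-invariant sheaves is determined by its field sections), identify those sections with the cohomotopy groups of Remark~\ref{rem:EckmannHilton}, and invoke the discussion preceding the lemma to see that $(\mu_n)_*$ is multiplication by $n$ there. Your explicit verification of the numerical hypotheses of Proposition~\ref{prop:cohomotopy} is a welcome addition but does not change the argument.
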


\begin{proof}
According to the Freudenthal suspension theorem \cite[Theorem 6.61]{Morel08}, the map
\[
\A^d\setminus 0\to \Omega_{S^1}\Sigma_{S^1}(\A^d\setminus 0)
\]
is $\A^1-(2d-4)$-connected. Consequently, the sheaf $\piaone_m(\A^d\setminus 0)$ is already $S^1$-stable for $2\leq m\leq 2d-4$. In other words, it can be computed as the sheaf associated to the presheaf
\[
U\mapsto  [\Sigma^m_{S^1}\Sigma_{S^1}^{\infty}(U_+),\Sigma_{S^1}^{\infty}(\A^d\setminus 0)]_{\mathrm{SH}^{S^1}(k)}.
\]
As noted above, one may use \cite[Remark 6.11]{Du20} and the fact that $\sqrt{-1}\in k$ to see that  the map 
\[
\Sigma_{S^1}^{\infty}(\mu_n)_*:\Sigma_{S^1}^{\infty}(\A^d\setminus 0)\to \Sigma_{S^1}^{\infty}(\A^d\setminus 0)
\]
induces the multiplication by $n$ homomorphism
\[
 [\Sigma^m_{S^1}\Sigma_{S^1}^{\infty}(U_+),\Sigma_{S^1}^{\infty}(\A^d\setminus 0)]_{\mathrm{SH}^{S^1}(k)}\to  [\Sigma^m_{S^1}\Sigma_{S^1}^{\infty}(U_+),\Sigma_{S^1}^{\infty}(\A^d\setminus 0)]_{\mathrm{SH}^{S^1}(k)}
\]
for any smooth $k$-scheme $U$. Consequently, the homomorphism on $\piaone_m(\A^d\setminus 0)$ induced by $\mu_n$ coincides with the multiplication by $n$ homomorphism. 
\end{proof}

\begin{rem}
If $\sqrt{-1}\not\in k$, then the map in cohomology induced by $\mu_n$ should be the multiplication by $n_{\epsilon}$. The same arguments as above most certainly work, showing in particular that $\piaone_{m}(\A^d\setminus 0)$ is endowed with a $\KMW_0(k)$-module structure, and that the map induced by $\mu_n$ is the multiplication by $n_{\epsilon}$ for this module structure. We will come back to this question in future work. 
\end{rem}

\begin{rem}
If $L$ is a line bundle over $X$ (or equivalently a $\Gm$-torsor), we may form the sheaf $\piaone_m(\A^d\setminus 0)(L)$ using the action of $\Gm$ described in Section \ref{sec:MoorePostnikov}. Under the hypothesis of the previous lemma, the morphism
\[
\mathrm{H}_{\Nis}^i(X,\piaone_m(\A^d\setminus 0)(L))\to \mathrm{H}_{\Nis}^i(X,\piaone_m(\A^d\setminus 0)(L))
\] 
induced by $\mu_n:\A^d\setminus 0\to \A^d\setminus 0$ is also the multiplication by $n$ homomorphism, as the same proof shows. 
\end{rem}

\subsection{Suslin matrices}\label{sec:Suslin}
In this section, we briefly recall a fundamental construction due to Suslin \cite[\S 5]{Suslin77c} starting with a few recollections on smooth affine models of some motivic spheres. For any $n\geq 1$, we denote by $Q_{2n-1}$ the smooth affine $k$-scheme whose global sections are
\[
k[Q_{2n-1}]=k[x_1,\ldots,x_n,y_1,\ldots,y_n]/(\sum_{i=1}^n x_iy_i-1).
\]
Mapping $(x_1,\ldots,x_n,y_1,\ldots,y_n)$ to $(x_1,\ldots,x_n)$ yields a morphism 
\[
p_n:Q_{2n-1}\to \A^n\setminus 0
\]
which is easily seen to have affine space fibers. In particular, $p_n$ is an isomorphism in $\mathcal H(k)$, and we may replace $\A^d\setminus 0$ by $Q_{2d-1}$ in the discussion.
Recall from \cite[\S 3.5]{Asok12b} that the explicit matrices given by Suslin in \cite[\S 5]{Suslin77c} yield for any $d\geq 1$ a morphism of spaces 
\[
u_d:Q_{2d-1}\to \mathrm{GL}
\]
where $\mathrm{GL}$ is the ``infinite linear group''. One can prove that the map factors (non uniquely) through $GL_d$ \cite[proof of Lemma 3.10]{Asok12b}, i.e. there is a commutative diagram
\begin{equation}\label{eqn:commutative}
\xymatrix{Q_{2d-1}\ar[r]^-{u^\prime_d}\ar@{=}[d]  & GL_d\ar[d]^-{i_d} \\
Q_{2d-1}\ar[r]_-{u_d} &  GL}
\end{equation}
of spaces. Finally, recall that the first row map induces a morphism 
\[
r_d:GL_d\to \A^d\setminus 0
\]
and that the composite $Q_{2d-1}\xrightarrow{r_du^\prime_d} (\A^d\setminus 0)$ can be explicitly computed as 
\[
(x_1,\ldots, x_d,y_1,\ldots,y_d)\mapsto (x_1^{(d-1)!},x_2,\ldots, x_d),
\]
i.e. coincides, up to identification of $Q_{2d-1}$ with $\A^d\setminus 0$, with the map $\mu_{(d-1)!}$ of the previous section. 

\begin{lem}\label{lem:suslinmatrices}
Let $d\geq 3$ be an integer, and let $m$ be an integer such that $2\leq m\leq 2d-4$. For any integer $i\geq 0$ and any smooth scheme $X$ over a field $k$ with $\sqrt {-1}\in k$, the cokernel of the homomorphism
\[
\mathrm{H}_{\Nis}^i(X,\piaone_m(\mathrm{GL}_d))\to \mathrm{H}_{\Nis}^i(X,\piaone_m(\A^d\setminus 0))
\]
induced by $r_d:GL_d\to \A^d\setminus 0$ is $(d-1)!$-torsion.
\end{lem}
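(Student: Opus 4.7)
The plan is to combine the commutative diagram \eqref{eqn:commutative}, the explicit factorization of $\mu_{(d-1)!}$ through $r_d$ and $u_d'$, and Lemma \ref{lem:multiplication}. Concretely, since $\mu_{(d-1)!}$ factors as $\A^d\setminus 0 \xrightarrow{u_d'} \mathrm{GL}_d \xrightarrow{r_d} \A^d\setminus 0$, applying the (functorial) homotopy sheaf functor $\piaone_m(-)$ yields a factorization of $(\mu_{(d-1)!})_{*}$ as
\[
\piaone_m(\A^d\setminus 0) \xrightarrow{(u_d')_*} \piaone_m(\mathrm{GL}_d) \xrightarrow{(r_d)_*} \piaone_m(\A^d\setminus 0).
\]

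Next, I would apply $\mathrm{H}^i(X,-)$, which is functorial in the coefficient sheaf, to obtain a factorization
\[
\mathrm{H}^i(X,\piaone_m(\A^d\setminus 0)) \to \mathrm{H}^i(X,\piaone_m(\mathrm{GL}_d)) \xrightarrow{(r_d)_*} \mathrm{H}^i(X,\piaone_m(\A^d\setminus 0))
\]
whose composite is the map induced by $\mu_{(d-1)!}$. Because the hypotheses ($d\geq 3$, $2\leq m\leq 2d-4$, $\sqrt{-1}\in k$) are exactly those of Lemma \ref{lem:multiplication} with $n=(d-1)!$, this composite is multiplication by $(d-1)!$ on $\mathrm{H}^i(X,\piaone_m(\A^d\setminus 0))$.

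Finally, for any class $\alpha\in \mathrm{H}^i(X,\piaone_m(\A^d\setminus 0))$, the element $(d-1)!\cdot \alpha$ equals $(r_d)_*\bigl((u_d')_*(\alpha)\bigr)$ and therefore lies in the image of $(r_d)_*$. This shows that $(d-1)!$ annihilates the cokernel, which gives the claim. There is no real obstacle here: the statement is essentially a direct application of the functoriality of $\piaone_m$ and $\mathrm{H}^i(X,-)$ to the computation $r_du_d' = \mu_{(d-1)!}$ recalled before the lemma, combined with Lemma \ref{lem:multiplication}. The only point to check is the trivial observation that, since $\piaone_m$ is well-defined on $\mathcal{H}_\bullet(k)$ and $u_d'$, $r_d$ are morphisms of pointed spaces (after choosing appropriate base points), the factorization in $\mathcal{H}_\bullet(k)$ produced by diagram \eqref{eqn:commutative} is preserved.
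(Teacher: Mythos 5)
Your proposal is correct and is exactly the paper's argument: the paper derives the lemma by observing that $r_d u_d' = \mu_{(d-1)!}$ and invoking Lemma \ref{lem:multiplication}, which is precisely the factorization-through-$\mathrm{GL}_d$ argument you spell out. You have merely written out the details the paper leaves implicit.
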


\begin{proof}
The composite $Q_{2d-1}\xrightarrow{u'_d}\mathrm{GL}_d\xrightarrow{r_d}\A^d\setminus 0$ induces a string of homomorphisms
\[
\piaone_m(\A^d\setminus 0)\xrightarrow{(u'_d)_*}\piaone_m(\mathrm{GL}_d)\xrightarrow{(r_d)_*}\piaone_m(\A^d\setminus 0)
\]
yielding in turn homomorphisms
\[
\mathrm{H}_{\Nis}^i(X,\piaone_m(\A^d\setminus 0))\xrightarrow{(u'_d)_*}\mathrm{H}_{\Nis}^i(X,\piaone_m(\mathrm{GL}_d))\xrightarrow{(r_d)_*}\mathrm{H}_{\Nis}^i(X,\piaone_m(\A^d\setminus 0))
\]
Since $r_du'_d=\mu_{(d-1)!}$, we conclude using Lemma \ref{lem:multiplication}.
\end{proof}

\subsection{Representability}

The main result of this section is the following theorem (\cite{Morel08}, \cite{Schlichting15} and \cite{Asok15a}).

\begin{thm}
Let $k$ be a field, and let $n\in\N$ be an integer. Let further $X$ be a smooth affine scheme over $k$. Then, there is a functorial bijection of pointed sets
\[
[X_+,\mathrm{BGL}_n]_{\A^1}\simeq \mathcal{V}_n(X),
\]
where $\mathcal{V}_n(X)$ is the set of isomorphism classes of vector bundles of rank $n$ on $X$ (pointed by the class of the trivial bundle).
\end{thm}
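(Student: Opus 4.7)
The plan is to split the statement into two clearly separated parts: a topological/descent identification, and an $\A^1$-invariance step.

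First, I would ignore $\A^1$-localization and work in the simplicial Nisnevich homotopy category $\hsnis$. There, $\mathrm{BGL}_n$ is, by construction, the simplicial classifying space of the Nisnevich sheaf of groups $\mathrm{GL}_n$, so for any smooth scheme $X$ one has the classical identification
\[
[X,\mathrm{BGL}_n]_{s,\Nis}\isom \mathrm{H}^1_{\Nis}(X,\mathrm{GL}_n).
\]
Since rank-$n$ vector bundles on a smooth scheme are Zariski-locally (hence Nisnevich-locally) trivial and isomorphism classes are classified by $\mathrm{GL}_n$-torsors, the right-hand side is in bijection with $\mathcal{V}_n(X)$, pointed by the trivial torsor. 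Functoriality in $X$ is automatic.

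Second, for smooth affine $X$ I need to promote this to the $\A^1$-localized category, i.e.\ to show that the natural map
\[
[X,\mathrm{BGL}_n]_{s,\Nis}\longrightarrow [X,\mathrm{BGL}_n]_{\A^1}
\]
is a bijection. By the standard yoga of Bousfield localization, this reduces to verifying that an appropriate Nisnevich-fibrant model of $\mathrm{BGL}_n$ is $\A^1$-invariant on smooth affines, which via step one translates into the geometric statement
\[
\mathcal{V}_n(X)\isom \mathcal{V}_n(X\times\A^1)
\]
for every smooth affine $X$ over $k$. This is the theorem of Lindel (building on Quillen), and in the more general perfect base-field setting of Popescu desingularization, giving homotopy invariance of isomorphism classes of vector bundles on smooth affine schemes.

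The main obstacle, and the reason this innocuous-looking statement required the substantial work of Morel, Schlichting, and Asok--Hoyois--Wendt, is precisely this second step: combining Nisnevich descent with affine $\A^1$-invariance to obtain representability in the full motivic homotopy category $\mathcal{H}(k)$ (not merely in some affine replacement). I would invoke the affine representability theorem of Asok--Hoyois--Wendt, which asserts that any simplicial presheaf on $\Sm_k$ satisfying Nisnevich descent and affine $\A^1$-invariance on smooth affine schemes has the property that its sections on a smooth affine $X$ compute $[X,-]_{\A^1}$. Applying this black box to $\mathrm{BGL}_n$, whose two hypotheses were established in the first two steps, yields the desired functorial bijection. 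For $n\neq 2$ this can alternatively be deduced from Morel's theorem that $\mathrm{GL}_n$ is strongly $\A^1$-invariant, together with his identification $[X,\mathrm{BG}]_{\A^1}\isom \mathrm{H}^1_{\Nis}(X,G)$; the case $n=2$ is the subtle one treated by Schlichting.
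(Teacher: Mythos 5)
The paper does not actually prove this statement: it is quoted as a black box with references to Morel, Schlichting and Asok--Hoyois--Wendt, so there is no internal proof to compare yours against. Your two-step outline --- identify $[X,\mathrm{BGL}_n]_{s,\Nis}$ with $\mathrm{H}^1_{\Nis}(X,\mathrm{GL}_n)\cong\mathcal{V}_n(X)$ via torsors, then upgrade to the $\A^1$-local category using Lindel's affine homotopy invariance and the affine representability machinery of \cite{Asok15a} --- is indeed the architecture of the proof in those references, so the route is the right one.

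Two caveats. First, the Asok--Hoyois--Wendt criterion is not quite ``Nisnevich descent plus affine $\A^1$-invariance of the presheaf'': what must be verified is that the singular construction $\operatorname{Sing}^{\A^1}\mathrm{BGL}_n$ satisfies affine Nisnevich excision, and this is precisely the hard technical point (it requires gluing statements for $\mathrm{GL}_n$-torsors over $X\times\A^1$); Nisnevich descent for $\mathrm{BGL}_n$ itself does not formally transfer to its singular construction, so the black box as you state it would not literally apply. Second, your closing aside is false as stated: $\mathrm{GL}_n$ is \emph{not} a strongly $\A^1$-invariant sheaf of groups for $n\geq 2$ --- its sections are not even $\A^1$-invariant, since an elementary matrix with off-diagonal entry $t$ over $X\times\A^1$ is not constant in the $\A^1$-direction --- and if it were, Morel's formula $[X,\mathrm{B}G]_{\A^1}\cong\mathrm{H}^1_{\Nis}(X,G)$ would give representability of $\mathcal{V}_n$ on \emph{all} smooth schemes, which is known to fail for non-affine $X$. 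What Morel actually proves for $n\neq 2$ is that $\operatorname{Sing}^{\A^1}$ of the relevant Grassmannian is $\A^1$-local, via the affine Brown--Gersten property. Since this remark is offered only as an alternative and is not load-bearing, it does not invalidate your main argument.
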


Under the bijection of the above theorem, the map
\[
\mathrm{BGL}_n\to \mathrm{BGL}_{n+1}
\]
induced by the group homomorphism $\mathrm{GL}_n\to \mathrm{GL}_{n+1}$ mapping a matrix $M$ to the matrix $\mathrm{diag}(M,1)$ yields a map
\[
\mathcal{V}_n(X)\to \mathcal{V}_{n+1}(X)
\]
which sends the isomorphism class of a vector bundle $\mathcal{E}$ to the isomorphism class of $\mathcal{E}\oplus \OO_X$. Iterating these maps, we obtain a map 
\[
\mathrm{BGL}_n\to \mathrm{BGL}
\]
which, under the identification $[X_+,\mathrm{BGL}]_{\A^1}\simeq \ker \left(\mathrm{rk}:\mathrm{K}_0(X)\to \Z\right)$ maps the isomorphism class of $\mathcal{E}$ to the class $[\mathcal{E}]-[\OO_X^n]$. In particular, Suslin's cancellation conjecture is equivalent to know whether the map
\[
\mathrm{BGL}_{d-1}\to\mathrm{BGL} 
\]
induces a monomorphism
\[
[X_+,\mathrm{BGL}_{d-1}]_{\A^1}\to [X_+,\mathrm{BGL}]_{\A^1}
\]
for any smooth affine scheme $X$ of dimension $d$ over an algebraically closed field. We will pursue this line of thought in the next section, using the Moore-Postnikov tower of the above morphism of spaces.

\section{Cancellation from the motivic point of view}

For $d\geq 3$, consider the commutative diagram
\begin{equation}\label{eqn:commutativeMP}
\xymatrix{F_{d-1}\ar[r]\ar@{-->}[d] & \mathrm{BGL}_{d-1}\ar[d]\ar[r] & \mathrm{BGL}\ar@{=}[d] \\
F_{d}\ar[r] & \mathrm{BGL}_{d}\ar[r] & \mathrm{BGL}}
\end{equation}
where $F_d$ and $F_{d-1}$ are the respective homotopy fibers. The homotopy fibers of the left-hand vertical maps are equivalent by the octahedron axiom and in particular we obtain a fiber sequence
\[
\A^{d}\setminus 0\to F_{d-1}\to F_{d}
\] 
in view of \cite[Proposition 8.12]{Morel08}. 
As $\mathrm{B}GL_{d}\to \mathrm{BGL}$ induces an isomorphism on homotopy sheaves $\piaone_i(-)$ if $i<d$ and an epimorphism for $i=d$ (\cite[Theorem 3.2]{Asok12b}), it follows that $F_{i}$ is $\A^1-(i-1)$-connected for $i=d-1,d$. The above fibration induces then an exact sequence of sheaves
\[
\piaone_{d}(\A^{d}\setminus 0)\to \piaone_{d}(F_{d-1})\to \piaone_{d}(F_{d})\to \KMW_{d}\to \piaone_{d-1}(F_{d-1})\to 0
\]
and one obtains the following result using now \cite[p.12]{Du20}:
\begin{equation}
\piaone_{d-1}(F_{d-1})=\begin{cases} \KMW_{d} & \text{ if $d$ is odd.} \\
 \KM_{d} & \text{ if $d$ is even.}
 \end{cases}
\end{equation}
In case $d$ is odd, we then obtain an exact sequence 
\begin{equation}\label{eqn:odd}
\piaone_{d}(\A^{d}\setminus 0)\to \piaone_{d}(F_{d-1})\to \piaone_{d}(F_{d})\to 0,
\end{equation}
while when $d$ is even we obtain an exact sequence
\begin{equation}\label{eqn:even}
\piaone_{d}(\A^{d}\setminus 0)\to \piaone_{d}(F_{d-1})\to \piaone_{d}(F_{d})\to \mathbf{I}^{d+1}\to 0.
\end{equation}

Now, note that the spaces $F_{d-1},F_d$ are endowed with an action of loops in $\mathrm{BGL}$ (being homotopy fibers), and that $\A^d\setminus 0$ is endowed with the action of loops in $\mathrm{BGL}_d$ described in Section \ref{sec:MoorePostnikov}. Since $\piaone_1({BGL}_d)\simeq  \piaone_1({BGL})\simeq \Gm$, it is straightforward to check that the exact sequence of sheaves 
\[
\piaone_{d}(\A^{d}\setminus 0)\to \piaone_{d}(F_{d-1})\to \piaone_{d}(F_{d})\to \KMW_{d}\to \piaone_{d-1}(F_{d-1})\to 0
\]
is $\Gm$-equivariant. We've seen that the action of $\Gm$ on $\A^d\setminus 0$ is explicit, and actually induces the usual action of $\Gm$ on $\KMW_{d}$ which becomes trivial on $\KM_d$. It follows that for any line bundle $L$ over $X$, we have
\begin{equation}
\piaone_{d-1}(F_{d-1})(L)=\begin{cases} \KMW_{d}(L) & \text{ if $d$ is odd.} \\
 \KM_{d} & \text{ if $d$ is even.}
 \end{cases}
\end{equation} 

We will use the following lemma (\cite[Proposition 5.1]{Asok12a}).

\begin{lem}\label{lem:Izero}
Suppose that $X$ is a smooth affine scheme of dimension $d$ over an algebraically closed field $k$ of characteristic different from $2$, and that $L$ is a line bundle on $X$. Then, $\mathbf{I}^{m}(L)_{\vert X}=0$ for any $m\geq d+1$.
\end{lem}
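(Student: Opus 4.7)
The plan is to exploit that $\mathbf{I}^m$ is a strictly $\aone$-invariant unramified sheaf on $\Sm_k$, so that its vanishing as a sheaf on $X$ is detected by its values on residue fields of points of smooth $k$-schemes of dimension at most $d$.

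More precisely, by Morel's work $\mathbf{I}^m$ admits a Rost--Schmid (Gersten-type) resolution whose zeroth term on a smooth $U/k$ is $\bigoplus_{x\in U^{(0)}}(i_x)_\ast I^m(k(x))$, and the canonical map $\mathbf{I}^m(U)\hookrightarrow \bigoplus_{x\in U^{(0)}} I^m(k(x))$ is injective. Since every étale neighborhood in the small Nisnevich site of $X$ is itself smooth of dimension $\leq d$ over $k$, to prove $\mathbf{I}^m_{|X}=0$ it suffices to show that $I^m(F)=0$ for every finitely generated field extension $F/k$ with $\mathrm{trdeg}_k F\leq d$. For such an $F$, Serre's theorem on the cohomological dimension of function fields over an algebraically closed base yields $\mathrm{cd}_2(F)\leq d$.

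To conclude, I would invoke Voevodsky's resolution of the Milnor conjecture to identify $I^m(F)/I^{m+1}(F)\cong H^m_{\et}(F,\Z/2)$, which vanishes for every $m\geq d+1$ by the previous step. The descending chain $I^{d+1}(F)\supseteq I^{d+2}(F)\supseteq\cdots$ is therefore constant, and the Arason--Pfister Hauptsatz identifies its intersection with $0$, so $I^m(F)=0$ for every $m\geq d+1$. The main technical input is Morel's Gersten resolution for $\mathbf{I}^m$, which is deep but exactly the tool adapted to this setting; once it is granted, the remaining arguments are purely field-theoretic and classical. (Implicitly one assumes $\mathrm{char}(k)\neq 2$, which is harmless in the applications of the lemma within this paper.)
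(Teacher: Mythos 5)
Your argument is correct and is essentially the standard proof: the paper itself gives no proof of this lemma, simply citing \cite[Proposition 5.1]{Asok12a}, and the proof there proceeds exactly as you do --- reduce by unramifiedness of $\mathbf{I}^m$ to finitely generated fields $F/k$ of transcendence degree $\leq d$, use $\mathrm{cd}_2(F)\leq d$, and conclude via the resolution of the Milnor conjecture on quadratic forms together with the Arason--Pfister Hauptsatz. Your parenthetical caveat about $\mathrm{char}(k)\neq 2$ is also apt, since every application in the paper assumes $d!\in k^\times$ with $d\geq 3$.
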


\begin{cor}\label{cor:MilnorminusWitt}
Suppose that $X$ is a smooth affine scheme of dimension $d$ over an algebraically closed field $k$ of characteristic different from $2$. For any line bundle $L$ over $X$, the morphism of sheaves
\[
\KMW_m(L)\to \KM_m
\]
induces an isomorphism $(\KMW_m)(L)_{\vert X}\to (\KM_m)_{\vert X}$ for any $m\geq d$.
\end{cor}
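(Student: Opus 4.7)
The plan is to invoke the fundamental short exact sequence of strictly $\A^1$-invariant sheaves
$$
0 \to \mathbf{I}^{m+1} \to \KMW_m \to \KM_m \to 0,
$$
which realizes the canonical map $\KMW_m \to \KM_m$ as a surjection whose kernel is precisely the $(m+1)$-st power of the fundamental ideal sheaf. Restriction to the small Nisnevich (or Zariski) site of $X$ is an exact functor on sheaves of abelian groups, so restricting this sequence yields an exact sequence
$$
0 \to \mathbf{I}^{m+1}_{\vert X} \to (\KMW_m)_{\vert X} \to (\KM_m)_{\vert X} \to 0.
$$

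The hypothesis $m \geq d$ gives $m+1 \geq d+1$, so Lemma \ref{lem:Izero} applies and yields $\mathbf{I}^{m+1}_{\vert X} = 0$. The restricted map is therefore injective as well as surjective, hence an isomorphism, as required. There is no real obstacle here: the entire content of the statement is packaged into the fundamental sequence together with Lemma \ref{lem:Izero}, which supplies the vanishing of $\mathbf{I}^{m+1}$ on a smooth affine scheme of dimension $d$ over an algebraically closed field.
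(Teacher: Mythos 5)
Your proposal is correct and follows exactly the paper's own argument: restrict the fundamental exact sequence $0\to\mathbf{I}^{m+1}\to \KMW_m\to \KM_m\to 0$ to $X$ and apply Lemma \ref{lem:Izero} with $m+1\geq d+1$ to kill the kernel. The paper states this in one line; your write-up just makes the exactness-of-restriction step explicit.
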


\begin{proof}
This follows immediately from the exact sequence of sheaves on $X$ \cite[Proposition 2.6]{Asok12b}
\[
0\to\mathbf{I}^{m+1}(L)\to \KMW_m(L)\to \KM_m\to 0.
\]
\end{proof}

Consequently, we see that the map $ \KMW_{d}(L)\to \piaone_{d-1}(F_{d-1})(L)$ above induces an isomorphism
\begin{equation}\label{eqn:identificationsheaf}
(\KM_d)_{\vert X}\simeq \piaone_{d-1}(F_{d-1})(L)_{\vert X}
\end{equation}
for any $d\geq 2$. We will use this fact in the subsequent discussion without further comment.
We now state our main technical result, whose proof will be deferred until the next section.

\begin{thm}\label{thm:Suslinvanishing}
Let $X$ be a smooth affine scheme of dimension $d\geq 3$ over an algebraically closed field $k$ in which $d!\in k^\times$ and let $L/X$ be a line bundle. Then, we have
\[
\mathrm{H}_{\Nis}^d(X,\piaone_{d}(\A^d\setminus 0)(L))=0.
\]
\end{thm}

\begin{cor}\label{cor:isomFd}
Let $X$ be a smooth affine scheme of dimension $d\geq 3$ over an algebraically closed field $k$ in which $d!\in k^\times$, and let $L/X$ be a line bundle. Then, the map 
\[
F_{d-1}\to F_d 
\]
induces an isomorphism $\mathrm{H}_{\Nis}^d(X,\piaone_d(F_{d-1})(L)) \to \mathrm{H}_{\Nis}^d(X,\piaone_d(F_d)(L))$.
\end{cor}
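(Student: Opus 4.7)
The plan is to deduce the corollary from Theorem \ref{thm:Suslincancellation} by chasing the long exact sequences in Nisnevich cohomology arising from the four-term exact sequences \eqref{eqn:odd} and \eqref{eqn:even}. Throughout, I will use that $X$ has Nisnevich cohomological dimension $d$ for strictly $\A^1$-invariant sheaves (by Morel's results, \cite[\S 5]{Morel08}), so that all terms $\mathrm{H}^{i}_{\Nis}(X,-)$ with $i>d$ vanish for such coefficients.

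First, I would split the exact sequence into two short exact sequences. Let $I$ be the image of $\piaone_d(F_{d-1})\to \piaone_d(F_d)$ and let $K$ be its kernel, so that one has short exact sequences of sheaves
\[
0\to K\to \piaone_d(F_{d-1})\to I\to 0, \qquad 0\to I\to \piaone_d(F_d)\to C\to 0,
\]
where $C=0$ if $d$ is odd and $C=\mathbf{I}^{d+1}$ if $d$ is even. The map we want to analyze is the composite $\mathrm{H}^d_{\Nis}(X,\piaone_d(F_{d-1}))\twoheadrightarrow \mathrm{H}^d_{\Nis}(X,I)\hookrightarrow \mathrm{H}^d_{\Nis}(X,\piaone_d(F_d))$, so it suffices to show that each is an isomorphism.

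For the first short exact sequence, the exact sequences \eqref{eqn:odd} and \eqref{eqn:even} give a surjection $\piaone_d(\A^d\setminus 0)\twoheadrightarrow K$. This induces a surjection $\mathrm{H}^d_{\Nis}(X,\piaone_d(\A^d\setminus 0))\twoheadrightarrow \mathrm{H}^d_{\Nis}(X,K)$, since the cokernel of this sheaf epimorphism injects into an $\mathrm{H}^{d+1}_{\Nis}$ which vanishes. By Theorem \ref{thm:Suslincancellation}, $\mathrm{H}^d_{\Nis}(X,\piaone_d(\A^d\setminus 0))=0$, hence $\mathrm{H}^d_{\Nis}(X,K)=0$. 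The long exact sequence associated to $0\to K\to \piaone_d(F_{d-1})\to I\to 0$ then yields $\mathrm{H}^d_{\Nis}(X,\piaone_d(F_{d-1}))\isomto \mathrm{H}^d_{\Nis}(X,I)$ (using the vanishing of $\mathrm{H}^{d+1}_{\Nis}(X,K)$ on the right).

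For the second short exact sequence, if $d$ is odd then $C=0$ and there is nothing to do. If $d$ is even, then $C=\mathbf{I}^{d+1}$, and by Lemma \ref{lem:Izero} we have $\mathbf{I}^{d+1}_{\vert X}=0$, so in particular $\mathrm{H}^{d-1}_{\Nis}(X,C)=\mathrm{H}^d_{\Nis}(X,C)=0$. The long exact sequence associated to $0\to I\to \piaone_d(F_d)\to C\to 0$ then yields $\mathrm{H}^d_{\Nis}(X,I)\isomto \mathrm{H}^d_{\Nis}(X,\piaone_d(F_d))$. Composing the two isomorphisms gives the corollary. No genuine obstacle arises here, as the argument is a straightforward cohomological bookkeeping; the essential input is Theorem \ref{thm:Suslincancellation}, together with the standard vanishing $\mathbf{I}^{d+1}_{\vert X}=0$.
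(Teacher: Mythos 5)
Your proof is correct and follows essentially the same route as the paper: both deduce the isomorphism from the four-term exact sequences \eqref{eqn:odd} and \eqref{eqn:even} together with Theorem \ref{thm:Suslincancellation}, Lemma \ref{lem:Izero}, and the bound $\mathrm{cd}(X)\leq d$. The paper simply asserts the resulting four-term exact sequence of $\mathrm{H}^d_{\Nis}$-groups directly, whereas you spell out the underlying splitting into short exact sequences of (strictly $\A^1$-invariant) sheaves --- a more careful rendering of the same argument.
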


\begin{proof}
Suppose first that $d$ is even, in which case we have the exact sequence \eqref{eqn:even}
\[
\piaone_{d}(\A^{d}\setminus 0)\to \piaone_{d}(F_{d-1})\to \piaone_{d}(F_{d})\to \mathbf{I}^{d+1}\to 0.
\]
Restricting to $X$, and using the fact that the morphism of sheaves involved are $\Gm$-equivariant, we obtain a sequence
\[
\piaone_{d}(\A^{d}\setminus 0)_{\vert X}(L)\to \piaone_{d}(F_{d-1})_{\vert X}(L)\to \piaone_{d}(F_{d})_{\vert X}(L)\to \mathbf{I}^{d+1}_{\vert X}(L)\to 0.
\]
Using Lemma \ref{lem:Izero}, it reduces to an exact sequence
\[
\piaone_{d}(\A^{d}\setminus 0)_{\vert X}(L)\to \piaone_{d}(F_{d-1})_{\vert X}(L)\to \piaone_{d}(F_{d})_{\vert X}(L)\to 0.
\]
In case $d$ is odd, we obtain the same exact sequence in view of \eqref{eqn:odd}. To conclude, we observe that $X$ is of cohomological dimension $\leq d$ and consequently the above exact sequence yields an exact sequence
\[
\mathrm{H}_{\Nis}^d(X,\piaone_{d}(\A^{d}\setminus 0)(L)) \to \mathrm{H}_{\Nis}^d(X,\piaone_{d}(F_{d-1})(L)) \to \mathrm{H}_{\Nis}^d(X,\piaone_{d}(F_{d})(L)) \to 0
\]
allowing to conclude using Theorem \ref{thm:Suslincancellation}.
\end{proof}

We come back to Diagram \eqref{eqn:commutativeMP}
\[
\xymatrix{F_{d-1}\ar[r]\ar@{-->}[d] & \mathrm{BGL}_{d-1}\ar[d]\ar[r] & \mathrm{BGL}\ar@{=}[d] \\
F_{d}\ar[r] & \mathrm{BGL}_{d}\ar[r] & \mathrm{BGL}.}
\] 
and we set $\mathbf{G}:=\piaone_1(\mathrm{BGL}_{d-1})= \piaone_1(\mathrm{BGL})\simeq \mathbb{G}_m$. 
We apply the Moore-Postnikov tower to the first line, obtaining for $n\in\N$ pointed spaces $E_n$ satisfying the properties of Theorem \ref{thm:MoorePostnikov}. Using the fact that $F_{d-1}$ is $\A^1-(d-2)$-connected, we see that the maps $q_n:E_n\to E_{n-1}$ for $1\leq n\leq d-2$ are all weak-equivalences and we identify these spaces with $\mathrm{BGL}$. The first nontrivial step in the tower yields a homotopy Cartesian square
\[
\xymatrix{E_{d-1}\ar[d]_-{q_{d-1}}\ar[r]  & \mathrm{B}\Gm \ar[d]  \\
\mathrm{BGL}\ar[r] &  \mathrm{K}^{\mathbf{G}}(\piaone_{d-1}F_{d-1},d)}
\]
while the second step provides the following homotopy Cartesian square
\[
\xymatrix{E_{d}\ar[d]_-{q_{d}}\ar[r]  & \mathrm{B}\Gm \ar[d]  \\
E_{d-1}\ar[r] &  \mathrm{K}^{\mathbf{G}}(\piaone_{d}F_{d-1},d+1).}
\]
Recall that the spaces $E_{d-1}$ and $E_d$ come endowed with a structural morphism to $\mathrm{B}\Gm$. This yields a map $[X_+,E_{d-1}]_{\A^1}\to [X_+,\mathrm{B}\Gm]_{\A^1}\simeq \mathrm{Pic}(X)$, allowing to define the determinant $\mathrm{det}(\xi)\in \mathrm{Pic}(X)$ of a homotopy class $\xi\in [X_+,E_{d-1}]_{\A^1}$ (the same applies to $\mathrm{E}_d$). 

\begin{rem}
We will not use it in this paper, but it is indeed possible to identify the morphism
\[
 \mathrm{BGL}\to  \mathrm{K}^{\mathbf{G}}(\piaone_{d-1}F_{d-1},d)
\]
constructed in the first step, and similarly the morphism in the second step. For this, consider the commutative diagram of fiber sequences
\[
\xymatrix{\A^d\setminus 0\ar[r]\ar[d] & \mathrm{BGL}_{d-1}\ar[r]\ar@{=}[d] & \mathrm{BGL}_{d}\ar[d] \\
F_{d-1}\ar[r] & \mathrm{BGL}_{d-1}\ar[r] & \mathrm{BGL}}
\]
where the right-hand vertical map is induced by the inclusion $\mathrm{GL}_d\to \mathrm{GL}$ and the left-hand one is induced by the other two. Applying the Moore-Postnikov tower to the first line, we obtain a morphism
\[
 e:\mathrm{BGL}_d\to  \mathrm{K}^{\mathbf{G}}(\KMW_d,d)
\] 
which is known to be the Euler class (almost by definition, see also \cite{Asok13} for a more concrete formula). Comparing the two towers, we obtain a commutative diagram
\[
\xymatrix{\mathrm{BGL}_d\ar[r]\ar[d] & \mathrm{K}^{\mathbf{G}}(\KMW_d,d)\ar[d] \\
 \mathrm{BGL}\ar[r] &  \mathrm{K}^{\mathbf{G}}(\piaone_{d-1}F_{d-1},d)}
\] 
Now, we may use \cite[Lemma 5.3]{Asok13} to obtain a cofiber sequence
\[
\mathrm{BGL}_d\to \mathrm{BGL}_{d+1}\to \mathrm{Th}(\mathcal V_{d+1}) 
\]
where $\mathcal V_{d+1}$ is the universal rank $d+1$ vector bundle on $ \mathrm{BGL}_{d+1}$. For any strictly $\A^1$-invariant sheaf, this yields an exact sequence
\[
\mathrm{H}^d_{\Nis}( \mathrm{Th}(\mathcal V_{d+1}),\mathbf{A})\to \mathrm{H}^d_{\Nis}(\mathrm{BGL}_{d+1},\mathbf{A})  \to \mathrm{H}^d_{\Nis}( \mathrm{BGL}_d,\mathbf{A}).
\]
By \cite[Corollary 3.14]{Asok13}, we have $\mathrm{H}^d_{\Nis}( \mathrm{Th}(\mathcal V_{d+1}),\mathbf{A})\simeq \mathrm{H}^d_{\Nis,\mathrm{BGL}_{d+1}}(\mathcal V_{d+1},\mathbf{A})$ 
and the latter is trivial since $\mathrm{BGL}_{d+1}\subset \mathcal V_{d+1}$ is of codimension $d+1$. Consequently, the morphism $ \mathrm{BGL}\to  \mathrm{K}^{\mathbf{G}}(\piaone_{d-1}F_{d-1},d)$ is uniquely determined by its restriction to $\mathrm{BGL}_d$. In case $d$ is even, the morphism $\KMW_d\to \piaone_{d-1}F_{d-1}=\KM_d$ is the projection and thus the above morphism is just the Chern class $c_d$, while if $d$ is odd the morphism is the Euler class, which is stable.
\end{rem}

Applying now the Moore-Postnikov tower to the bottom line of \eqref{eqn:commutativeMP}, we obtain spaces $\tilde E_n$ for $n\in\N$ which can be obtained using the same recipe. As $F_d$ is now $\A^1-(d-1)$-connected, we can identify $\tilde E_n$ with $\mathrm{BGL}$ for $n\leq d-1$ and we have a homotopy Cartesian square
\[
\xymatrix{\tilde E_{d}\ar[r]\ar[d]_-{\tilde q_d} & \mathrm{B}\Gm\ar[d] \\
 \mathrm{BGL}\ar[r] &  \mathrm{K}^{\mathbf{G}}(\piaone_{d}F_{d},d+1).}
\] 
The morphism of Moore-Postnikov towers induced by the vertical maps in the diagram translates into the following two diagrams 
\begin{equation}\label{eqn:firststep}
\xymatrix{E_{d-1}\ar[r]^-{q_{d-1}}\ar[d]_-{q_{d-1}}  & \mathrm{BGL}\ar[r]\ar@{=}[d] & \mathrm{K}^{\mathbf{G}}(\piaone_{d-1}F_{d-1},d)\ar[d] \\
 \mathrm{BGL}\ar@{=}[r] &  \mathrm{BGL}\ar[r] & \ast}
\end{equation}
and
\begin{equation}\label{eqn:secondstep}
\xymatrix{E_d\ar[r]^-{q_d}\ar[d] & E_{d-1}\ar[r]\ar[d]^-{q_{d-1}} & \mathrm{K}^{\mathbf{G}}(\piaone_{d}F_{d-1},d+1)\ar[d] \\
\tilde E_d\ar[r]_-{\tilde q_d} &  \mathrm{BGL}\ar[r] & \mathrm{K}^{\mathbf{G}}(\piaone_{d}F_{d},d+1). }
\end{equation}

A priori, we would have to go all the way through the Moore-Postnikov towers to say anything sensible about the sets $[X_+,\mathrm{BGL}_{d-1}]_{\A^1}$ and $[X_+,\mathrm{BGL}_{d}]_{\A^1}$, but the next result shows that the extra information needed a priori is irrelevant.

\begin{lem}\label{lem:Step0}
The morphisms $i_n:\mathrm{BGL}_{d-1}\to E_d$ and $\tilde i_n:\mathrm{BGL}_{d}\to \tilde E_d$ provided by the Moore-Postnikov towers induce bijections
\[
[X_+,\mathrm{BGL}_{d-1}]_{\A^1}\to [X_+,E_d]_{\A^1}
\]
and $[X_+,\mathrm{BGL}_{d}]_{\A^1}\to [X_+,\tilde E_d]_{\A^1}$ for any smooth scheme of dimension $\leq d$.
\end{lem}

\begin{proof}
The proof of \cite[Proposition 6.2]{Asok12a} applies.
\end{proof}

As a consequence, we see that it suffices to show that both 
\[
(q_{d-1})_*:[X_+,E_{d-1}]_{\A^1}\to [X_+,\mathrm{BGL}]_{\A^1}
\]
and 
\[
(q_{d-2})_*:[X_+,E_{d}]_{\A^1}\to [X_+,E_{d-1}]_{\A^1}
\]
are injective to prove Suslin's cancellation conjecture. We proceed with the first step following \cite{Du20}.

\begin{prop}\label{prop:Step1}
Let $X$ be a smooth affine scheme of dimension $d$ over an algebraically closed field $k$ such that $(d-1)!\in k^\times$. Then, the map
\[
(q_{d-1})_*:[X_+,E_{d-1}]_{\A^1}\to [X_+,\mathrm{BGL}]_{\A^1}
\]
is injective. 
\end{prop}

\begin{proof}
Consider the homotopy Cartesian square
\[
\xymatrix{E_{d-1}\ar[d]_-{q_{d-1}}\ar[r]  & \mathrm{B}\Gm \ar[d]  \\
\mathrm{BGL}\ar[r] &  \mathrm{K}^{\mathbf{G}}(\piaone_{d-1}F_{d-1},d)}
\]
and a homotopy class $\xi\in [X_+,E_{d-1}]_{\aone}$ (that can be considered as the homotopy class of an actual map as $E_{d-1}$ is supposed to be fibrant). We have to prove that the fiber of $(q_{d-1})_*(\xi)$ is trivial, i.e. consists only of $\xi$. For $L:=\mathrm{det}(\xi)$, Lemma \ref{lem:homotopyfiber} yields a fiber sequence
\[
\mathrm{RMap}_\ast(X_+,\mathrm{K}(\piaone_{d-1}(F_{d-1})(L),d-1))\to \mathrm{RMap}_\ast(X_+,E_{d-1})\xrightarrow{(q_{d-1})_*}  \mathrm{RMap}_\ast(X_+,\mathrm{BGL})
\]
which extends to the left by considering the space of loops with end points at $(q_{d-1})_*(\xi)$ in $\mathrm{RMap}_\ast(X_+,\mathrm{BGL})$.

Since $\mathrm{BGL}$ is an abelian group object in $\mathcal H(k)$ (\cite[Corollary 5.6]{Du20}), we may consider the ``addition with $(q_{d-1})_*\xi$'' operation, which yields maps
\[
[X_+,\mathrm{BGL}]_{\aone}\to [X_+,\mathrm{BGL}]_{\aone}
\]
and 
\[
T_{(q_{d-1})_*\xi}:[X_+,\mathrm{GL}]_{\aone}\to [X_+,\mathrm{GL}]_{\aone}
\]
which are not mysterious. For the first one, $(q_{d-1})_*\xi$ corresponds to a class $[\mathcal{E}]-[\OO_X^m]$ where $m\in\N$ and $\mathcal{E}$ is a vector bundle of rank $m$ under the identification $[X_+,\mathrm{BGL}]_{\aone}\simeq \widetilde{\mathrm{K}}_0(X)$ (ultimately, we'll consider $m=d-1$) and the above operation reads as the map
\[
\widetilde{\mathrm{K}}_0(X)\to \widetilde{\mathrm{K}}_0(X)
\]
defined by adding $[\mathcal{E}]-[\OO_X^m]$. At the level of $[X_+,\mathrm{GL}]_{\aone}$, a similar interpretation holds: We may identify $[X_+,\mathrm{GL}]_{\aone}=\mathrm{K}_1(X)$ with the abelian group associated to the group of stable automorphisms of either $\OO_X^m$ (for $m$ large enough) or $\mathcal{E}$. Given an automorphism $f$ of $\OO_X^m$, we consider the automorphism $f\oplus \Id:\OO^m_X\oplus \mathcal{E}\to \OO^m_X\oplus \mathcal{E}$ which is a stable automorphism of $\mathcal{E}$. In any case, this procedure has the effect of modifying the base point of $[X_+,\mathrm{BGL}]_{\aone}$, but the relevant loop spaces are weakly equivalent.
We then obtain an exact sequence
\[
[X_+,GL]_{\aone}\xrightarrow{\Delta} \mathrm{H}_{\Nis}^{d-1}(X,\piaone_{d-1}(F_{d-1})(L))\to [X_+,E_{d-1}]_{\aone}\xrightarrow{(q_{d-1})_*} [X_+,\mathrm{BGL}]_{\aone}
\]
in which the quotient of the left-hand groups computes the stabilizer of $\xi$. Using \eqref{eqn:identificationsheaf}, we see that $\mathrm{H}_{\Nis}^{d-1}(X,\piaone_{d-1}(F_{d-1})(L))=\mathrm{H}_{\Nis}^{d-1}(X,\KM_d)$ and we are left with a homomorphism
\[
\Delta:[X_+,GL]_{\aone}\to\mathrm{H}_{\Nis}^{d-1}(X,\KM_d)
\]
which can be explicitly described using the Chern classes of $(q_{d-1})_*\xi$ and the higher Chern classes of matrices (\cite[Proposition 7.3]{Du20}): If $M\in  [X_+,GL]_{\aone}=\mathrm{K}_1(X)$ has Chern classes $c_i(M)\in \mathrm{H}^{i-1}_{\Nis}(X,\KM_{i})$, then 
\begin{equation}\label{eqn:ChernandChern}
\Delta(M)=c_d(M)+\sum_{r=1}^{d-1}c_r(M)c_{d-r}((q_{d-1})_*\xi).
\end{equation}
For any $n\geq 2$, the Suslin matrix 
\[
S_n:Q_{2n-1}\to \mathrm{GL}
\]
discussed in Section \ref{sec:Suslin} has the property that its composite with the $m$-th Chern class
\[
Q_{2n-1}\xrightarrow{S_n} \mathrm{GL}\xrightarrow{c_m} \mathrm{K}(\KM_m,m-1)
\]
is trivial if $n\neq m$ and equals $(n-1)!$ times the $k$-invariant 
\[
\kappa_n:Q_{2n-1}\to  \mathrm{K}(\KM_n,n-1)
\] 
if $n=m$ (\cite[Proposition 6.9, Remark 6.10]{Du20}). In particular, we obtain (in view of Lemma \ref{lem:multiplication}) for any map $\mu:X\to Q_{2d-1}\simeq \A^d\setminus 0$
\[
(\Delta\circ S_d)_*(\mu)=(d-1)!\cdot \kappa_d\mu.
\]
On the other hand, the $k$-invariant $\kappa_d:Q_{2d-1}\to  \mathrm{K}(\KM_d,d-1)$ induces a surjective homomorphism \cite[Proposition 1.2.6]{Asok21b}
\[
[X_+,\mathbb{A}^d\setminus 0]_{\A^1} \to \mathrm{H}_{\Nis}^{d-1}(X,\KM_d)
\] 
from which one concludes in view of Lemma \ref{lem:unique} that $\Delta$ is surjective and then that the map
\[
(q_{d-1})*:[X_+,E_{d-1}]_{\aone}\to [X_+,\mathrm{BGL}]_{\aone}
\]
is injective. 
\end{proof}

We now return to the morphism $q_d$ fitting in the commutative diagram  \eqref{eqn:secondstep}
\[
\xymatrix{E_d\ar[r]^-{q_d}\ar[d] & E_{d-1}\ar[r]\ar[d]^-{q_{d-1}} & \mathrm{K}^{\mathbf{G}}(\piaone_{d}F_{d-1},d+1)\ar[d] \\
\tilde E_d\ar[r]_-{\tilde q_d} &  \mathrm{BGL}\ar[r] & \mathrm{K}^{\mathbf{G}}(\piaone_{d}F_{d},d+1). }
\]

\begin{prop}\label{prop:Step2}
Let $X$ be a smooth affine scheme of dimension $d$ over an algebraically closed field $k$ such that $d!\in k^\times$. Then, the map
\[
(q_{d})_*:[X_+,E_{d}]_{\A^1}\to [X_+,E_{d-1}]_{\A^1}
\]
is injective. 
\end{prop}

\begin{proof}
Let $\xi\colon X\to E_d$ be a map. We may consider $[X_+,E_d]_{\aone}$ as pointed by $\xi$, and the other sets $[X_+,-]_{\aone}$ as pointed by the images of $\xi$ under the relevant morphisms. We then obtain a commutative ladder (with suggestive notation for the change of base points) 
\[
\xymatrix@C=0.7em{[X_+,\Omega_{(q_d)_*\xi}E_{d-1}]_{\aone}\ar[r]\ar[d] & \mathrm{H}_{\Nis}^d(X,\piaone_{d}(F_{d-1})(\det(\xi)))\ar[r]\ar[d] & [X_+,E_d]_{\aone}\ar[r]^-{(q_{d})_*}\ar[d] & [X_+,E_{d-1}]_{\aone}\ar[d]^-{(q_{d-1})_*} \\
[X_+,\Omega_{(q_{d-1}q_d)_*\xi}\mathrm{BGL}]_{\aone}\ar[r] & \mathrm{H}_{\Nis}^d(X,\piaone_{d}(F_{d})(\det(\xi)))\ar[r] & [X_+,\tilde E_d]_{\aone}\ar[r]_-{(\tilde q_d)_*} &  [X_+,\mathrm{BGL}]_{\aone}}
\]
and our aim is to show that the left top horizontal arrow is surjective.  That would prove the proposition, since then the orbit of $\xi$ under the action of the group $\mathrm{H}_{\Nis}^d(X,\piaone_{d}(F_{d-1})(\det(\xi)))$ would be trivial, showing that the class of $\xi$ is the unique one mapping to the class of $(q_{d-1})_*(\xi)$. 

We already know from Corollary \ref{cor:isomFd} that the homomorphism
\[
\mathrm{H}_{\Nis}^d(X,\piaone_{d}(F_{d-1})(\det(\xi)))\to  \mathrm{H}_{\Nis}^d(X,\piaone_{d}(F_{d})(\det(\xi)))
\] 
is an isomorphism, and that $\piaone_{d}(F_{d})(\det(\xi))\simeq \KM_{d+1}$ \eqref{eqn:identificationsheaf}.  Besides, we know from Section \ref{subsec:fibre} that translation by (the class of) $(q_{d-1}q_d)_*\xi$ induces an isomorphism $t:[X_+,GL]_{\aone}\to[X_+,\Omega_{(q_{d-1}q_d)_*\xi}\mathrm{BGL}]_{\aone}$ and that the composite 
\[
\Delta:[X_+,GL]_{\aone}\xrightarrow{t}[X_+,\Omega_{(q_{d-1}q_d)_*\xi}\mathrm{BGL}]_{\aone}\xrightarrow{\partial} \mathrm{H}_{\Nis}^d(X,\KM_{d+1})
\]
is computed using the Chern classes. Summarizing, we obtain a commutative diagram
\begin{equation}\label{eqn:finalstep}
\xymatrix{[X_+,\Omega_{(q_d)_*\xi}E_{d-1}]_{\aone}\ar[r]^-\partial\ar[d]_-{(q_{d-1})_*} & \mathrm{H}_{\Nis}^d(X,\piaone_{d}(F_{d-1})(\det(\xi)))\ar[d]^-{\cong} \\
[X_+,\Omega_{(q_{d-1}q_d)_*\xi}\mathrm{BGL}]_{\aone}\ar[r]^-{\tilde\partial} & \mathrm{H}_{\Nis}^d(X,\KM_{d+1}) \\
[X_+,\mathrm{GL}]_{\aone}\ar[ru]_-{\Delta}\ar[u]^-{t} & }
\end{equation}
As mentioned above, the composite 
\[
[X_+,Q_{2d+1}]_{\aone}\xrightarrow{S_{d+1}} [X_+,\mathrm{GL}]_{\aone}\xrightarrow{\Delta} \mathrm{H}_{\Nis}^d(X,\KM_{d+1})
\]
is $d!$ times the relevant $k$-invariant (argue as in the proof of Proposition \ref{prop:Step1}) and the right-hand group is $d!$-divisible. Indeed, this cohomology group can be computed using Kato's complex in Milnor $K$-theory (see e.g. \cite{Rost96} for a comprehensive treatment), which shows that $\mathrm{H}_{\Nis}^d(X,\KM_{d+1})$ is a quotient of
\[
\bigoplus_{x\in X^{(d)}}\KM_1(k(x)).
\]
As $k$ is algebraically closed, $k^\times$ is $d!$-divisible and consequently $\KM_1(k(x))=k^\times$ also. It follows that $\mathrm{H}_{\Nis}^d(X,\KM_{d+1})$ is indeed $d!$-divisible and that the composite
\[
[X_+,Q_{2d+1}]_{\aone}\xrightarrow{S_{d+1}} [X_+,\mathrm{GL}]_{\aone}\xrightarrow{\Delta} \mathrm{H}_{\Nis}^d(X,\KM_{d+1})
\]
is surjective: If $\mu:X\to \A^{d+1}\setminus 0$ is a morphism
\[
(\Delta\circ S_{d+1})_*(\mu)=(d)!\cdot \kappa_{d+1}\mu.
\] 
where $\kappa_{d+1}:Q_{2d+1}\to  \mathrm{K}(\KM_{d+1},d)$ is the $k$-invariant (which induce a surjective homomorphism $[X_+,Q_{2d+1}]_{\A^1}\to \mathrm{H}_{\Nis}^d(X,\KM_{d+1})$).  Let then $\alpha,\beta\in \mathrm{H}_{\Nis}^d(X,\KM_{d+1})$ such that $\beta=\frac 1{d!}\alpha$ and let $\mu\in[X_+,Q_{2d+1}]_{\A^1}$ be such that $\kappa_{d+1}\mu=\beta$. The image of $\mu\in [X_+,Q_{2d+1}]_{\A^1}$ under any composite in the diagram
\[
\xymatrix{ & [X_+,\Omega_{(q_d)_*\xi}E_{d-1}]_{\aone}\ar[r]^-\partial\ar[d]_-{(q_{d-1})_*} & \mathrm{H}_{\Nis}^d(X,\piaone_{d}(F_{d-1})(\det(\xi)))\ar[d]^-{\cong} \\
 & [X_+,\Omega_{(q_{d-1}q_d)_*\xi}\mathrm{BGL}]_{\aone}\ar[r]^-{\tilde\partial} & \mathrm{H}_{\Nis}^d(X,\KM_{d+1}) \\
[X_+,Q_{2d+1}]_{\A^1}\ar[r]_{(S_{d+1})_*}\ar[ru]^-{t(S_{d+1})_*} & [X_+,\mathrm{GL}]_{\aone}\ar[ru]_-{\Delta}\ar[u]^-{t} & }
\] 
is then $\alpha$. To conclude that $\partial$ is surjective, it suffices then to prove that $t(S_{d+1})_*$ lifts to a map $f: [X_+,Q_{2d+1}]_{\aone}\to [X_+,\Omega_{(q_d)_*\xi}E]_{\aone}$ such that $(q_{d-1})_*f=t(S_{d+1})_*$. 
For this, we use once again the Cartesian square
\[
\xymatrix{E_{d-1}\ar[d]_-{q_{d-1}}\ar[r]  & \mathrm{B}\Gm \ar[d]  \\
\mathrm{BGL}\ar[r] &  \mathrm{K}^{\mathbf{G}}(\piaone_{d-1}F_{d-1},d)}
\]
which yields in view of Lemma \ref{lem:homotopyfiber} an exact sequence
\[
[X_+,\Omega_{(q_{d})_*\xi}E]_{\aone}\xrightarrow{(q_{d-1})_*} [X_+,\Omega_{(q_{d-1}q_d)_*\xi}\mathrm{BGL}]_{\aone}\to \mathrm{H}^{d-1}_{\Nis}(X,\piaone_{d-1}F_{d-1}(\det(\xi)))\simeq \mathrm{H}^{d-1}_{\Nis}(X,\KM_d).
\]
We claim that the composite 
\[
[X_+,Q_{2d+1}]_{\aone}\xrightarrow{(S_{d+1})_*} [X_+,\mathrm{GL}]_{\A^1}\xrightarrow{t} [X,\Omega_{(q_{d-1}q_d)_*\xi}\mathrm{BGL}]_{\aone}\to\mathrm{H}^{d-1}_{\Nis}(X,\KM_d)
\] 
is trivial. Indeed, we have shown in \eqref{eqn:ChernandChern} that the right-hand composite can be computed using the Chern classes of $(q_{d-1}q_d)_*\xi$ and the Chern classes on $\mathrm{K}_1(X)$. We also stated that the Chern classes $c_m$ of $S_{d+1}$ are trivial if $m\neq d+1$ (\cite[Proposition 6.9, Remark 6.10]{Du20}) and the result follows. This proves that $t(S_{d+1})_*$ indeed lifts to a map $f:[X_+,Q_{2d+1}]_{\aone}\to [X_+,\Omega_{(q_d)_*\xi}E]_{\aone}$ as required.
\end{proof}

We are finally in position to prove our main result. 

\begin{thm}[Cancellation theorem]
Let $X$ be a smooth affine scheme of dimension $d$ over an algebraically closed field $k$ with $d!\in k^\times$. Let $\mathscr{E}$ and $\mathscr{E}^\prime$ be vector bundles of rank $d-1$ over $X$. Then, $\mathscr{E}$ and $\mathscr{E}^\prime$ are stably isomorphic if and only if they are isomorphic.
\end{thm}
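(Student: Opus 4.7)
The plan is to transport the statement into the motivic homotopy category $\mathcal H(k)$: by the representability results recalled above, the claim is equivalent to injectivity of
\[
[X,\mathrm{BGL}_{d-1}]_{\aone}\to [X,\mathrm{BGL}]_{\aone}.
\]
I would obtain this by traversing the Moore-Postnikov tower of $\mathrm{BGL}_{d-1}\to \mathrm{BGL}$, compared via Diagram \eqref{eqn:commutativeMP} with that of $\mathrm{BGL}_d\to \mathrm{BGL}$. Since the fibre $F_{d-1}$ is $\aone$-$(d-2)$-connected and $X$ has Nisnevich cohomological dimension $d$, only two stages can contribute obstructions: the one controlled by $\piaone_{d-1}(F_{d-1})$ (which on $X$ agrees with $\KM_d$ by Corollary \ref{cor:MilnorminusWitt}) and the one controlled by $\piaone_d(F_{d-1})$.

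For the first stage (Diagram \eqref{eqn:firststep}), I would follow \cite{Du20}: after translating the base point using the abelian group structure on $\mathrm{BGL}$, the stabiliser of a class $\xi\in[X,E]_{\aone}$ is the image of a boundary homomorphism
\[
\Delta:[X,\mathrm{GL}]_{\aone}\to \mathrm{H}^{d-1}_{\Nis}(X,\KM_d),\qquad \Delta(M)=c_d(M)+\sum_{r=1}^{d-1} c_r(M)c_{d-r}(q_*\xi).
\]
Plugging in the Suslin matrix $S_d$ from Section \ref{sec:Suslin} annihilates every lower Chern class and produces $(d-1)!\kappa_d$; since $(d-1)!\in k^\times$ and $\kappa_d$ is a generator in the appropriate sense (Lemma \ref{lem:unique}), the map $\Delta$ is surjective onto what it needs to hit, and hence $q_*:[X,E]_{\aone}\to [X,\mathrm{BGL}]_{\aone}$ is injective.

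The second stage, Diagram \eqref{eqn:secondstep}, is the crux of the argument. The relevant obstruction now lives in $\mathrm{H}^d_{\Nis}(X,\piaone_d(F_{d-1}))$, a group about which nothing is known \emph{a priori}. The strategy is to invoke Corollary \ref{cor:isomFd}, itself a consequence of Theorem \ref{thm:Suslincancellation} together with Lemma \ref{lem:Izero}, to identify this group with $\mathrm{H}^d_{\Nis}(X,\KM_{d+1})$. The same Chern-class computation applied to the Suslin matrix $S_{d+1}$ then produces $d!$ times the top $k$-invariant, and $d!\in k^\times$ delivers the surjectivity one needs. Combined with the first stage and with Suslin's classical cancellation in rank $d$ (identifying $[X,\mathrm{BGL}]_{\aone}$ with $\mathcal{V}_d(X)$), injectivity of the full composite $[X,E^\prime]_{\aone}\to [X,\mathrm{BGL}]_{\aone}$ follows.

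The hard part is, unsurprisingly, Theorem \ref{thm:Suslincancellation} itself. Without the vanishing of $\mathrm{H}^d_{\Nis}(X,\piaone_d(\A^d\setminus 0))$, the exact sequences \eqref{eqn:odd} and \eqref{eqn:even} provide no way to exchange $F_{d-1}$ for $F_d$ in top cohomology, and the second-stage analysis collapses. Every other ingredient (Suslin matrices, Chern-class formulas, basepoint translation via the abelian group structure on $\mathrm{BGL}$) is by now fairly standard; the weight of the whole argument therefore rests on proving this vanishing, which I would attack by combining the $d!$-torsion statement of Lemma \ref{lem:suslinmatrices} with a $d!$-divisibility coming, via \cite{Fasel10b}, from the triviality of the fibre of $s_{d-1}$.
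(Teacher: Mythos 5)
Your proposal follows the paper's argument essentially step for step: reduction to injectivity of $[X,\mathrm{BGL}_{d-1}]_{\aone}\to[X,\mathrm{BGL}]_{\aone}$, a two-stage Moore--Postnikov analysis with basepoint translation via the abelian group structure on $\mathrm{BGL}$ and the Chern-class formula for $\Delta$, Suslin matrices supplying the factors $(d-1)!$ and $d!$ against the (unique) divisibility of the target groups, and Corollary \ref{cor:isomFd} --- hence Theorem \ref{thm:Suslincancellation}, proved by combining $d!$-torsion with divisibility coming from the triviality of the fibre of $s_{d-1}$ --- to control the otherwise unknown group $\mathrm{H}^d_{\Nis}(X,\piaone_d(F_{d-1}))$. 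This is the same proof as the paper's, and I see no gaps in your outline.
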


\begin{proof}
This follows from Lemma \ref{lem:Step0}, Proposition \ref{prop:Step1} and Proposition \ref{prop:Step2}.
\end{proof}

\section{Vanishing of the relevant cohomology group}\label{sec:divisibility}

In this section, we prove the following result, completing the proof of Theorem \ref{thm:Suslinvanishing} in view of Lemma \ref{lem:twistedcohomology}.

\begin{thm}\label{thm:Suslincancellation}
Suppose that $X$ is a smooth affine scheme of dimension $d\geq 3$ over an algebraically closed field $k$ in which $d!\in k^\times$. Then
\[
\mathrm{H}_{\Nis}^d(X,\piaone_{d}(\A^d\setminus 0))=0.
\]
\end{thm}

We will assume first that $d\geq 4$, treating the case $d=3$ separately. The first ingredient of this theorem is Proposition \ref{prop:divisible} below, which asserts that the group is divisible prime to the characteristic of the base field. To see this, we consider the Postnikov tower of $\A^d\setminus 0$, which is nothing else than the Moore-Postnikov tower associated to the morphism $\A^d\setminus 0\to \ast$. We have already seen that $\A^d\setminus 0$ is $\A^1-(d-2)$-connected (\cite[Theorem 6.38]{Morel08}) and that the first nontrivial homotopy sheaf is $\piaone_{d-1}(\A^d\setminus 0)=\KMW_d$. As a result, the second stage of the tower takes the form of a fibre sequence 
\[
E_d\to E_{d-1}=\mathrm{K}(\KMW_{d},d-1)\to \mathrm{K}(\piaone_{d}(\A^d\setminus 0),d+1).
\]
Applying $[X_+,-]_{\A^1}$ to the above exact sequence induces, in view of Lemma \ref{lem:Step0} (replacing $\mathrm{BGL}_{d-1}$ with $\A^d\setminus 0$), an exact sequence
\begin{equation}\label{eqn:main}
\mathrm{H}_{\Nis}^{d-2}(X,\KMW_{d})\to \mathrm{H}_{\Nis}^{d}(X,\piaone_{d}(\A^d\setminus 0))\to [X_+,\A^d\setminus 0]_{\A^1}\to  \mathrm{H}_{\Nis}^{d-1}(X,\KMW_d)\to 0
\end{equation}
The term ``exact sequence'' being slightly imprecise, we make a few comments. First, since  $\A^d\setminus 0$ is $\A^1-(d-2)$-connected and it follows again from \cite{Asok21b} that the set $[X_+,\A^d\setminus 0]$ is actually an abelian group provided $d\geq 4$. On the other hand, there is an identification
 \[
 [X_+,\A^d\setminus 0]_{\A^1}=\mathrm{Um}_d(X)/\mathrm{E}_d(X)
 \]
where the right-hand side is the set of orbits of unimodular rows of length $d$ under the action of the group $\mathrm{E}_d(X)\subset \mathrm{GL}_d(X)$ (\cite[\S 4.2, \S 4.3]{Asok12b}). Under our assumption that $d\geq 4$, the set $\mathrm{Um}_d(X)/\mathrm{E}_d(X)$ is endowed with an abelian group structure defined by van der Kallen in \cite[\S 4.1]{vdKallen89}. It follows from \cite{Lerbet21} that the group structure on the right-hand side and the left-hand side actually coincide. Further, the map 
\[
 [X_+,\A^d\setminus 0]_{\A^1}\to  \mathrm{H}_{\Nis}^{d-1}(X,\KMW_d)
\]
is a homomorphism of abelian groups (\cite[Discussion before Proposition 1.2.6]{Asok21b}). 

Now, observe that the morphism of spaces $\mathrm{K}(\piaone_{d}(\A^d\setminus 0),d)\to E_d$ is a morphism between $\A^1-(d-2)$-connected spaces, and consequently the map 
\[
 \mathrm{H}_{\Nis}^{d}(X,\piaone_{d}(\A^d\setminus 0))\to [X_+,\A^d\setminus 0]_{\A^1}
\]  
is also a homomorphism of abelian groups (\cite[Proposition 1.2.2]{Asok21b}). Finally, the left-hand morphism in \eqref{eqn:main} is necessarily a morphism of groups by definition. Altogether, we may consider \eqref{eqn:main} as an exact sequence of abelian groups in the ordinary sense. 

We want to deduce from this long exact sequence that $\mathrm{H}_{\Nis}^{d}(X,\piaone_{d}(\A^d\setminus 0))$ is $(d-1)!$-divisible in most situations. This will require a couple of lemmas.

\begin{lem}\label{lem:unimodular}
The group $[X,\A^d\setminus 0]_{\A^1}$ is divisible prime to the characteristic of the base field. 
\end{lem}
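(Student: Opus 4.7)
The plan is to identify the group $[X,\A^d\setminus 0]_{\A^1}$ with the orbit set of unimodular rows, translate multiplication by $n$ into the explicit Suslin-type map $\mu_n$, and then invoke the divisibility result proved in \cite{Fasel10b}.

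First, for $X$ smooth affine, the set $[X,\A^d\setminus 0]_{\A^1}$ is in natural bijection with $\mathrm{Um}_d(X)/\mathrm{E}_d(X)$, the set of unimodular rows of length $d$ up to elementary equivalence, and the abelian group structure coming from Proposition \ref{prop:cohomotopy} agrees with van der Kallen's group structure, by the reference to \cite{Lerbet21} mentioned just above. Since $k$ is algebraically closed we have $\sqrt{-1}\in k$, so the discussion immediately preceding Lemma \ref{lem:multiplication} shows that multiplication by an integer $n$ in this group is induced by the self-map $\mu_n\colon\A^d\setminus 0\to\A^d\setminus 0$, $(x_1,\ldots,x_d)\mapsto(x_1^n,x_2,\ldots,x_d)$. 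Consequently, $n$-divisibility of the group translates into the concrete assertion that every unimodular row $(v_1,\ldots,v_d)$ on $X$ is elementarily equivalent to one of the form $(w_1^n,w_2,\ldots,w_d)$ for some $(w_1,\ldots,w_d)\in\mathrm{Um}_d(X)$.

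Next, when $n$ is invertible in $k$ this assertion is exactly the divisibility statement established in \cite{Fasel10b}, and I would simply invoke it. Its proof splits into two ingredients that I would recall: on the one hand, the Suslin matrices recalled in Section \ref{sec:Suslin} give, essentially formally, divisibility by $(d-1)!$ of $\mathrm{Um}_d(X)/\mathrm{E}_d(X)$ for any smooth affine $X$; on the other hand, the triviality of the fiber of $s_{d-1}$, proved in \cite{Fasel10b} under the hypothesis $d!\in k^\times$ and $k=\overline{k}$, allows one to complete any unimodular row of length $d$ to an element of $\mathrm{GL}_d(X)$. Feeding this completability into a Mennicke-symbol style manipulation produces the remaining roots and upgrades divisibility by $(d-1)!$ to divisibility by every integer invertible in $k$.

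The genuinely delicate step — which is the heart of the proof in \cite{Fasel10b} — is precisely this last passage from $(d-1)!$-divisibility (which is cheap via Suslin matrices alone) to divisibility by all integers prime to the characteristic. It is exactly at this point that the hypothesis $d!\in k^\times$, via the triviality of the fiber of $s_{d-1}$, enters in an essential way. Everything else is bookkeeping: identifying the group, identifying multiplication by $n$, and then quoting the theorem.
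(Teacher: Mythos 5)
Your proof is correct and takes essentially the same route as the paper: identify $[X,\A^d\setminus 0]_{\A^1}$ with $\mathrm{Um}_d(X)/\mathrm{E}_d(X)$ (equipped with van der Kallen's group structure) and invoke the divisibility theorem of \cite[Theorem 7.5]{Fasel10b}. The additional unwinding of multiplication by $n$ via $\mu_n$ and the sketch of how \cite{Fasel10b} is proved are accurate context but not needed for the argument.
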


\begin{proof}
We've seen above that there is an identification 
\[
[X_+,\A^d\setminus 0]_{\A^1}=\mathrm{Um}_d(X)/\mathrm{E}_d(X)
\]
and we may now follow \cite[proof of Theorem 7.5]{Fasel10b} to get the result. We provide here some details for the convenience of the reader. Under the above identification, any element on the left-hand side corresponds to the class of a $d$-tuple $(a_1,\ldots,a_d)$ of global sections $a_i\in k[X]$ generating $k[X]$. Using a general position argument, under the form of a Bertini type theorem \cite[Theorem 1.5]{Swan74}, one may suppose that the ideal generated by $(a_4,\ldots,a_d)$ defines a smooth threefold $Y$. There is an obvious map
\[
\mathrm{Um}_3(Y)/\mathrm{E}_3(Y)\to \mathrm{Um}_d(X)/\mathrm{E}_d(X)
\]
mapping the class of $(a_1,a_2,a_3)$ to $(a_1,\ldots,a_d)$. The left-hand group is divisible prime to the characteristic of the base field (\cite[Theorem 7.2, Propositions 5.1 and 6.1]{Fasel10b}) and one deduces the result using the fact that the above map is a group homomorphism (the assumption that $-1$ is a square is used here).
\end{proof}

\begin{lem}\label{lem:unique}
The group $\mathrm{H}_{\Nis}^{d-1}(X,\KM_d)$ is uniquely divisible prime to the characteristic of $k$.
\end{lem}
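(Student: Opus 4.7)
My plan is to reduce the unique $n$-divisibility claim (for every $n$ coprime to $\mathrm{char}(k)$) to a vanishing for the sheaf $\KM_d/n$, and then to extract that vanishing from the Bloch--Ogus spectral sequence via the Bloch--Kato norm residue isomorphism combined with Artin's affine vanishing theorem.

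The first step uses Corollary~\ref{cor:MilnorminusWitt}: the sheaves $(\KMW_d)_{|X}$ and $(\KM_d)_{|X}$ agree, and so
\[
\mathrm{H}^{d-1}_{\Nis}(X, \KMW_d) \cong \mathrm{H}^{d-1}_{\Nis}(X, \KM_d).
\]
Fix $n\geq 1$ coprime to $\mathrm{char}(k)$. The short exact sequence of Nisnevich sheaves
\[
0 \to \KM_d \xrightarrow{n} \KM_d \to \KM_d/n \to 0
\]
together with the resulting long exact sequence (and the fact that $X$ has Nisnevich cohomological dimension $d$) shows that multiplication by $n$ on $\mathrm{H}^{d-1}_{\Nis}(X, \KM_d)$ is bijective provided that
\[
\mathrm{H}^{d-2}_{\Nis}(X, \KM_d/n) = \mathrm{H}^{d-1}_{\Nis}(X, \KM_d/n) = 0.
\]

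To establish this vanishing, I would invoke the Bloch--Kato norm residue isomorphism (Voevodsky--Rost), which identifies $\KM_d/n$ with the Nisnevich sheaf $\mathscr{H}^d_{\et}(\mu_n^{\otimes d})$ obtained by sheafifying $U \mapsto \mathrm{H}^d_{\et}(U, \mu_n^{\otimes d})$. Since $k$ is algebraically closed, $\mu_n^{\otimes d} \cong \Z/n$. I would then feed this into the Bloch--Ogus coniveau spectral sequence
\[
E_2^{p,q} = \mathrm{H}^p_{\Nis}(X, \mathscr{H}^q_{\et}(\Z/n)) \Longrightarrow \mathrm{H}^{p+q}_{\et}(X, \Z/n),
\]
which has three vanishing features tailored to the present situation: the Bloch--Ogus--Gersten resolution gives $E_2^{p,q} = 0$ for $p > q$; the cohomological dimension bound $\mathrm{cd}(k(X)) = d$ gives $\mathscr{H}^q_{\et}(\Z/n) = 0$, and hence $E_2^{p,q} = 0$, for $q > d$; and Artin's vanishing for smooth affines of dimension $d$ over $\overline{k}$ with $n$ invertible gives that the abutment $\mathrm{H}^{i}_{\et}(X, \Z/n)$ vanishes for $i > d$.

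At the two positions $(d-1, d)$ and $(d-2, d)$, every incoming differential $d_r$ originates at $E_r^{p-r, d+r-1}$, whose $q$-coordinate exceeds $d$ for $r \geq 2$, hence the source vanishes; every outgoing $d_r$ lands in $E_r^{p+r, d-r+1}$, whose $p$-coordinate strictly exceeds its $q$-coordinate (for $r \geq 2$ at both positions), hence the target vanishes. Consequently $E_\infty^{p, q} = E_2^{p, q}$ at these spots, and both $E_\infty$-terms are subquotients of $\mathrm{H}^{2d-1}_{\et}(X, \Z/n) = 0$ and $\mathrm{H}^{2d-2}_{\et}(X, \Z/n) = 0$ (using $2d-2 > d$, which holds for $d \geq 3$). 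The hard part here is not any single calculation but simply marshalling the three heavyweight inputs (Bloch--Kato, Bloch--Ogus--Gersten, Artin); once these are in place, the differential-tracking at the upper edge of the $E_2$ page is purely formal.
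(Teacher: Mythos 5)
Your treatment of the vanishing $\mathrm{H}^{d-1}_{\Zar}(X,\KM_d/n)=\mathrm{H}^{d-2}_{\Zar}(X,\KM_d/n)=0$ is correct and is exactly the paper's route: Bloch--Kato to identify $\KM_d/n$ with the sheafified \'etale cohomology, the Bloch--Ogus bound $E_2^{p,q}=0$ for $p>q$, cohomological dimension $\leq d$ of the relevant residue fields for $q>d$, and Artin's affine vanishing for the abutment. The gap is in the reduction step. The sequence
\[
0\to\KM_d\xrightarrow{\;n\;}\KM_d\to\KM_d/n\to 0
\]
is \emph{not} exact on the left: the sheaf $\KM_d$ on $\Sm_k$ has nontrivial $n$-torsion whenever $\mu_n\subset k$. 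Indeed, by Suslin's theorem on torsion in Milnor $K$-theory one has ${}_p\KM_d(F)=\{\zeta_p\}\cdot\KM_{d-1}(F)$ for $F\supset\mu_p$, and this is nonzero already for $F=k(t)$ (the tame symbol at $t=0$ sends $\{\zeta_p,t\}$ to $\zeta_p\neq 1$). Consequently the long exact sequence you invoke does not exist, and the injectivity half of ``uniquely divisible'' does not follow from your two vanishings.

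The paper instead factors multiplication by $p$ through the two genuine short exact sequences $0\to{}_p\KM_d\to\KM_d\to p\KM_d\to 0$ and $0\to p\KM_d\to\KM_d\to\KM_d/p\to 0$. Surjectivity of $\times p$ on $\mathrm{H}^{d-1}$ still follows from your vanishing of $\mathrm{H}^{d-1}(X,\KM_d/p)$ together with $\mathrm{H}^{d}(X,{}_p\KM_d)=0$ (no $p$-torsion in $\KM_0=\Z$ at the closed points of the Rost--Schmid complex). But injectivity additionally requires that $\mathrm{H}^{d-1}(X,{}_p\KM_d)$ maps to zero in $\mathrm{H}^{d-1}(X,\KM_d)$, i.e.\ that the connecting map $\mathrm{H}^{d-2}(X,p\KM_d)\to\mathrm{H}^{d-1}(X,{}_p\KM_d)$ is onto. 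This is the substantial part of the paper's proof missing from your argument: one identifies $\mathrm{H}^{d-1}(X,{}_p\KM_d)$ with $\mathrm{H}^{d-1}(X,\KM_{d-1}/p)$ via multiplication by $\{\zeta_p\}$ (Suslin), matches the resulting composite with the Bloch--Ogus $d_2$-differential $\mathrm{H}^{d-3}(X,\mathcal H^d(m))\to\mathrm{H}^{d-1}(X,\mathcal H^{d-1}(m))$ (Barbieri-Viale), and observes that this differential is surjective because $\mathrm{H}^{2d-2}_{\mathrm{et}}(X,\mu_p^{\otimes m})=0$ and $\mathrm{H}^{d-2}(X,\mathcal H^{d}(m))=0$. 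You would need to add all of this to close the gap.
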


\begin{proof}
We first observe that it is enough to prove the statement for $\mathrm{H}_{\mathrm{Zar}}^{d-1}(X,\KM_d)$ because of \cite[Corollary 5.43]{Morel08}.
We now follow the arguments of \cite{Fasel15} and consider for a prime $p$ (different from $\mathrm{char}(k)$) the multiplication by $p$ homomorphism $\KM_d\xrightarrow{\cdot p}\KM_d$, Writing ${}_p\KM_d$ for its kernel and $p\KM_d$ for its image, we obtain exact sequences of sheaves
\begin{equation}\label{eqn:pdivI}
0\to {}_p\KM_d\to \KM_d\to p\KM_d\to 0
\end{equation}
and 
\begin{equation}\label{eqn:pdivII}
0\to p\KM_d\to \KM_d\to \KM_d/p\to 0.
\end{equation}
The category of strictly $\A^1$-invariant sheaves being abelian \cite[Lemma 6.2.13]{Morel08}, the sheaves involved in the above exact sequences are all strictly $\A^1$-invariant. Moreover, their sections at a finitely generated field extension $F/k$ are respectively the $p$-torsion in $\KM_d(F)$ and $p\KM_d(F)$.
On the other hand, we may consider for any $m,n\in\N$ the Zariski sheaf $\mathcal H^n(m)$ on $X$ associated to the presheaf $U\mapsto \mathrm{H}_{\mathrm{et}}^n(U,\mu_p^{\otimes m})$. We know that an affine scheme of dimension $d$ over $k=\overline{k}$ is of cohomological dimension $\leq d$ (\cite[Chapter VI, Theorem 7.2]{Milne80}) and it follows that $\mathrm{H}_{\mathrm{et}}^n(U,\mu_p^{\otimes m})=0$ for any $n>d$ and for any open affine subscheme $U\subset X$. Consequently, $\mathcal H^n(m)=0$ if $n>d$. Now, the coniveau spectral sequence (\cite{Bloch74}, or \cite{Colliot97}) which converges to $\mathrm{H}^*_{\mathrm{et}}(X,\mu_p^{\otimes m})$ has input at page $2$ the groups
\[
E_2^{p,q}(m):=\mathrm{H}^{p}_{\mathrm{Zar}}(X,\mathcal H^q(m))
\]
with differentials of degree $(2,-1)$, i.e. of the form $d_2^{p,q}:E_2^{p,q}(m)\to E_2^{p+2,q-1}(m)$. More generally, the differentials at page $r$ are of degree $(r,-r+1)$. Choosing a primitive $p$-th root $\tau$ of unity, we obtain an isomorphism $\Z/p\simeq \mu_p$ and the spectral sequences for different $m$ are all identified. It is apparent from the definition that no nontrivial differentials can either start or end at the terms $E_2^{d-2,d}(m)$ and $E_2^{d-1,d}(m)$, yielding identifications
\[
\mathrm{H}^{d-1}_{\mathrm{Zar}}(X,\mathcal H^d(m))=E_{\infty}^{d-1,d}(m) \simeq \mathrm{H}^{2d-1}_{\mathrm{et}}(X,\mu_p^{\otimes m}) 
\]
and $\mathrm{H}^{d-2}_{\mathrm{Zar}}(X,\mathcal H^d(m))=E_{\infty}^{d-2,d}(m)$.
On the other hand, the only possible nontrivial differential that can hit the term $E_2^{d-1,d-1}(m)$ starts from $E_2^{d-3,d}(m)$
and we obtain an exact sequence
\[
\mathrm{H}^{d-3}_{\mathrm{Zar}}(X,\mathcal H^d(m))\to \mathrm{H}^{d-1}_{\mathrm{Zar}}(X,\mathcal H^{d-1}(m))\to E_{\infty}^{d-1,d-1}(m).
\]
Using the extension $0\to E_{\infty}^{d-1,d-1}(m)\to \mathrm{H}^{2d-2}_{\mathrm{et}}(X,\mu_p^{\otimes m})\to E_{\infty}^{d-2,d}(m)\to 0$, we finally obtain an exact sequence
\[
\mathrm{H}^{d-3}_{\mathrm{Zar}}(X,\mathcal H^d(m))\to \mathrm{H}^{d-1}_{\mathrm{Zar}}(X,\mathcal H^{d-1}(m))\to \mathrm{H}^{2d-2}_{\mathrm{et}}(X,\mu_p^{\otimes m})\to \mathrm{H}^{d-2}_{\mathrm{Zar}}(X,\mathcal H^{d}(m))\to 0.
\]
Using again the fact that $X$ is of cohomological dimension at most  $d$ (and that $d>2$), we deduce that $\mathrm{H}^{d-1}_{\mathrm{Zar}}(X,\mathcal H^d(m))=\mathrm{H}^{d-2}_{\mathrm{Zar}}(X,\mathcal H^{d}(m))=0$ and that the differential at page $2$
\[
\mathrm{H}^{d-3}_{\mathrm{Zar}}(X,\mathcal H^d(m))\to \mathrm{H}^{d-1}_{\mathrm{Zar}}(X,\mathcal H^{d-1}(m))
\]
is onto. Consider now the morphism of cycle modules $\KM_d(-)/p\to \mathbf{H}^d(-,\mu_p^{\otimes d})$ induced by the norm residue maps, which is an isomorphism by Voevosky's affirmation of the Bloch-Kato conjecture \cite{Voevodsky11}. The above vanishing statements immediately yield
\[
\mathrm{H}^{d-1}_{\mathrm{Zar}}(X,\KM_d/p)=\mathrm{H}^{d-2}_{\mathrm{Zar}}(X,\KM_d/p)=0.
\]
Now, we can consider the following diagram
\begin{equation}\label{eqn:snake}
\xymatrix@C=0.5em{\mathrm{H}_{\mathrm{Zar}}^{d-3}(X,\KM_d/p)\ar[d]\ar@{-->}[rd] & & & \mathrm{H}_{\mathrm{Zar}}^{d-2}(X,\KM_d/p)\ar[d] & \\
\mathrm{H}^{d-2}_{\mathrm{Zar}}(X,p\KM_d)\ar[r] & \mathrm{H}^{d-1}_{\mathrm{Zar}}(X,{}_p\KM_d)\ar[r] & \mathrm{H}_{\mathrm{Zar}}^{d-1}(X, \KM_d)\ar[r]\ar[rd]_-{\cdot p} & \mathrm{H}_{\mathrm{Zar}}^{d-1}(X, p\KM_d)\ar[d]\ar[r] & \mathrm{H}_{\mathrm{Zar}}^{d}(X,{}_p\KM_d) \\
& & &  \mathrm{H}^{d-1}_{\mathrm{Zar}}(X, \KM_d)\ar[d] & \\
& & &  \mathrm{H}^{d-1}_{\mathrm{Zar}}(X, \KM_d/p) &}
\end{equation}
where the horizontal arrows (resp. vertical arrows) are obtained via the exact sequence of sheaves \eqref{eqn:pdivI} (resp. \eqref{eqn:pdivII}). The solid diagonal arrow is just multiplication by $p$ and we now explain how to compute the dotted diagonal one. Let again $\tau$ be a primitive $p$-th root of unity, considered as an element of $k^\times=\KM_1(k)$. Multiplication by $\tau$ induces a map 
\[
(\KM_{d-1}/p)(L)\to {}_p\KM_{d}(L)
\]
for any field extension $L/k$, and in turn a morphism of cycle modules $\KM_{d-1}/p\to {}_p\KM_{d}(L)$. By Suslin's result \cite[Theorem 1.8]{Suslin87}, this morphism induces an isomorphism
\[
\mathrm{H}_{\mathrm{Zar}}^{d-1}(X,\KM_{d-1}/p)\to \mathrm{H}_{\mathrm{Zar}}^{d-1}(X,{}_p\KM_{d})
\]
It follows now from \cite[Proposition 7.5]{Barbieri96} that we obtain a commutative diagram
\[
\xymatrix{\mathrm{H}_{\mathrm{Zar}}^{d-3}(X,\KM_d/p)\ar@{=}[d]\ar[r] &\mathrm{H}_{\mathrm{Zar}}^{d-1}(X,\KM_{d-1}/p)\ar[d] \\
\mathrm{H}_{\mathrm{Zar}}^{d-3}(X,\KM_d/p)\ar@{-->}[r] & \mathrm{H}_{\mathrm{Zar}}^{d-1}(X,{}_p\KM_{d})}
\]
where the top horizontal arrow is the differential of the coniveau spectral sequence considered above. Consequently, the dotted arrow is onto. We now use \eqref{eqn:snake} to deduce that the multiplication by $p$ homomorphism is an isomorphism. We've proved that $\mathrm{H}_{\mathrm{Zar}}^{d-2}(X,\KM_d/p)=\mathrm{H}_{\mathrm{Zar}}^{d-1}(X,\KM_d/p)$ and that the dotted arrow is onto. Thus, \eqref{eqn:snake} reduces to a diagram
\[
\xymatrix{ & & 0\ar[d] & \\
 0\ar[r] &\mathrm{H}_{\mathrm{Zar}}^{d-1}(X, \KM_d)\ar[r]\ar[rd]_-{\cdot p} & \mathrm{H}_{\mathrm{Zar}}^{d-1}(X, p\KM_d)\ar[d]\ar[r] & \mathrm{H}^{d}_{\mathrm{Zar}}(X,{}_p\KM_d) \\
& &  \mathrm{H}^{d-1}_{\mathrm{Zar}}(X, \KM_d)\ar[d] & \\
& &  0 &}
\]
with exact rows and columns, and it suffices to show that $\mathrm{H}_{\mathrm{Zar}}^{d}(X,{}_p\KM_d)$ is trivial to conclude. This is a consequence of the existence of the Gersten complex and the fact that there is no $p$-torsion in $\KM_0=\Z$. 
\end{proof}

\begin{cor}\label{cor:kerneldiv}
The kernel of the homomorphism
\[
[X_+,\A^d\setminus 0]_{\A^1}\to  \mathrm{H}_{\Nis}^{d-1}(X,\KMW_d)\to 0
\]
is divisible prime to the characteristic of the base field. 
\end{cor}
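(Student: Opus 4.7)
The plan is to extract a short exact sequence of abelian groups from the long exact sequence \eqref{eqn:main} and then apply the snake lemma to multiplication by $p$ for each prime $p$ invertible in $k$. Set $A := [X,\A^d\setminus 0]_{\A^1}$ and $B := \mathrm{H}^{d-1}_{\Nis}(X,\KMW_d)$, and let $K$ denote the kernel of the surjection $A \to B$. Since the paragraph following \eqref{eqn:main} already justifies that \eqref{eqn:main} is a genuine exact sequence of abelian groups, we have the short exact sequence
\[
0 \to K \to A \to B \to 0.
\]

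Fix a prime $p \neq \mathrm{char}(k)$, and consider the endomorphism given by multiplication by $p$ on all three terms. The snake lemma yields the six-term exact sequence
\[
0 \to K[p] \to A[p] \to B[p] \to K/pK \to A/pA \to B/pB \to 0.
\]
By Lemma \ref{lem:unimodular}, the group $A$ is $p$-divisible, so $A/pA = 0$. By Lemma \ref{lem:unique}, the group $B$ is uniquely $p$-divisible, so both $B[p]$ and $B/pB$ vanish. Reading off the snake sequence, the vanishing of $B[p]$ on the left and of $A/pA$ on the right forces $K/pK = 0$; that is, $K$ is $p$-divisible. The identical argument runs for multiplication by any integer $n$ coprime to $\mathrm{char}(k)$ (one can even reduce to the prime case by the Chinese remainder style factoring of $n$-divisibility), which gives the corollary.

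I do not anticipate any real obstacle: the substantive work is entirely contained in Lemmas \ref{lem:unimodular} and \ref{lem:unique} (whose proofs, in turn, rest on \cite{Fasel10b} and the Bloch--Ogus/Suslin rigidity input already deployed above). Once those two inputs are in place, the corollary is a purely formal snake-lemma diagram chase, and no further geometric or $\A^1$-homotopy-theoretic ingredient is needed.
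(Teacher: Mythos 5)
Your proof is correct and follows essentially the same route as the paper: both arguments reduce to the purely formal fact that in a short exact sequence $0\to K\to A\to B\to 0$ with $A$ $p$-divisible and $B$ without $p$-torsion, the subgroup $K$ is $p$-divisible, with Lemmas \ref{lem:unimodular} and \ref{lem:unique} supplying the hypotheses. The paper verifies this fact by a direct element chase where you package it via the snake lemma, which is only a cosmetic difference.
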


\begin{proof}
In view of Lemmas \ref{lem:unimodular} and \ref{lem:unique}, the result follows from the following claim. If 
\[
0\to H\xrightarrow{i} G\xrightarrow{\pi} M\to 0
\]
is a short exact sequence of abelian groups with $G$ $p$-divisible and $M$ uniquely $p$-divisible, then $H$ is also $p$-divisible: If $h\in H$, then there exists $g\in G$ such that $pg=i(h)$. Then, $\pi(g)$ is $p$-torsion in $M$ and it follows that $g$ is in $H$.
\end{proof}

\begin{lem}\label{lem:cokerdiv}
The group $\mathrm{H}_{\Nis}^{d-2}(X,\KM_{d})$ is divisible prime to $\mathrm{char}(k)$.
\end{lem}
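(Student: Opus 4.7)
The plan is to push the method of Lemma \ref{lem:unique} one cohomological degree lower. By Corollary \ref{cor:MilnorminusWitt}, it suffices to show that $\mathrm{H}^{d-2}_{\Nis}(X, \KM_d)$ is $p$-divisible for every prime $p \neq \mathrm{char}(k)$. As in the preceding lemma, I factor multiplication by $p$ on $\KM_d$ through the two short exact sequences
\[
0 \to {}_{p}\KM_d \to \KM_d \to p\KM_d \to 0 \qquad \text{and} \qquad 0 \to p\KM_d \xrightarrow{j} \KM_d \to \KM_d/p \to 0,
\]
whose connecting maps in cohomology I denote $\partial$ and $\partial'$ respectively.

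Two facts already established in the proof of Lemma \ref{lem:unique} will suffice. First, the vanishing $\mathrm{H}^{d-2}(X, \KM_d/p) = 0$ makes the tautological map $j_*: \mathrm{H}^{d-2}(X, p\KM_d) \twoheadrightarrow \mathrm{H}^{d-2}(X, \KM_d)$ surjective. Second, the composite
\[
\partial\circ\partial': \mathrm{H}^{d-3}(X, \KM_d/p) \longrightarrow \mathrm{H}^{d-2}(X, p\KM_d) \longrightarrow \mathrm{H}^{d-1}(X, {}_{p}\KM_d)
\]
is surjective: after the $\tau$-twist identifying $\KM_{d-1}/p \simeq {}_{p}\KM_d$, it is precisely the Bloch-Ogus $d_2$ differential $\mathrm{H}^{d-3}(X,\mathcal{H}^d) \to \mathrm{H}^{d-1}(X,\mathcal{H}^{d-1})$, whose surjectivity is the content of the dotted-arrow argument in Lemma \ref{lem:unique} (resting on \cite[Proposition 7.5]{Barbieri96} and the vanishing of $\mathrm{H}^{2d-2}_{\mathrm{et}}(X, \mu_p^{\otimes m})$ for $d \geq 3$).

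The conclusion is then a short diagram chase. Given $\alpha \in \mathrm{H}^{d-2}(X, \KM_d)$, the first fact produces a preimage $\tilde\alpha \in \mathrm{H}^{d-2}(X, p\KM_d)$ with $j_*(\tilde\alpha) = \alpha$, and the second produces $\beta \in \mathrm{H}^{d-3}(X, \KM_d/p)$ with $\partial\partial'(\beta) = \partial(\tilde\alpha)$. Consequently $\tilde\alpha - \partial'(\beta) \in \ker\partial$, which by exactness of the first long exact sequence equals the image of the map $\mathrm{H}^{d-2}(X, \KM_d) \to \mathrm{H}^{d-2}(X, p\KM_d)$ induced by multiplication by $p$, so $\tilde\alpha - \partial'(\beta) = p\cdot\gamma$ for some $\gamma \in \mathrm{H}^{d-2}(X, \KM_d)$. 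Applying $j_*$ and invoking the exactness identity $j_*\partial' = 0$ from the second sequence yields $\alpha = p\gamma$.

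No genuinely new technical ingredient is required: the Bloch-Ogus/Barbieri-Viale surjectivity of $d_2$, which was used in Lemma \ref{lem:unique} to kill $p$-torsion in $\mathrm{H}^{d-1}(X, \KM_d)$, can be repurposed one degree below to enforce $p$-divisibility of $\mathrm{H}^{d-2}(X, \KM_d)$. The one mild subtlety is to remember that, unlike in Lemma \ref{lem:unique}, we do not need nor claim unique divisibility here, which is why surjectivity of $\partial\partial'$ alone is already enough.
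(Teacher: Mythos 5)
Your proposal is correct and follows exactly the paper's route: the same reduction to $\KM_d$ via Corollary \ref{cor:MilnorminusWitt}, the same two exact sequences \eqref{eqn:pdivI} and \eqref{eqn:pdivII}, and the same two inputs recycled from Lemma \ref{lem:unique} (the vanishing $\mathrm{H}^{d-2}(X,\KM_d/p)=0$ and the surjectivity of the dotted composite identified with the Bloch--Ogus $d_2$ differential). The only difference is that you spell out the diagram chase that the paper leaves as ``straightforward,'' and your chase is accurate.
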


\begin{proof}
The proof is similar to the proof of Lemma \ref{lem:unique}. We consider again the exact sequences of sheaves \eqref{eqn:pdivI} and \eqref{eqn:pdivII} and we obtain the following diagram:
\begin{equation*}
\xymatrix{ & \mathrm{H}_{\mathrm{Zar}}^{d-3}(X,\KM_d/p)\ar[d]\ar@{-->}[rd] & \\
 \mathrm{H}_{\mathrm{Zar}}^{d-2}(X, \KM_d)\ar[r]\ar[rd]_-{\cdot p} & \mathrm{H}_{\mathrm{Zar}}^{d-2}(X, p\KM_d)\ar[d]\ar[r] & \mathrm{H}_{\mathrm{Zar}}^{d-1}(X,{}_p\KM_d) \\
 &  \mathrm{H}_{\mathrm{Zar}}^{d-2}(X, \KM_d)\ar[d] & \\
 &  \mathrm{H}_{\mathrm{Zar}}^{d-2}(X, \KM_d/p) &}
\end{equation*}
We already know that $\mathrm{H}_{\mathrm{Zar}}^{d-2}(X, \KM_d/p)=0$ and that the dotted arrow is onto. A straightforward diagram chase allows to conclude.
\end{proof}

We have finally obtained the following result.

\begin{prop}\label{prop:divisible}
Let $X$ be a smooth affine scheme of dimension $d\geq 4$ over an algebraically closed field $k$. Then, the group $\mathrm{H}_{\Nis}^d(X,\piaone_{d}(\A^d\setminus 0))$ is divisible prime to $\mathrm{char}(k)$.
\end{prop}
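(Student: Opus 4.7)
The plan is to combine the exact sequence \eqref{eqn:main} with the divisibility results already established in the preceding lemmas via an elementary extension argument.

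First, I would break \eqref{eqn:main} into a short exact sequence
\[
0 \to A \to \mathrm{H}_{\Nis}^{d}(X,\piaone_{d}(\A^d\setminus 0)) \to B \to 0,
\]
where $A$ denotes the image of the left-hand map $\mathrm{H}_{\Nis}^{d-2}(X,\KMW_{d})\to \mathrm{H}_{\Nis}^{d}(X,\piaone_{d}(\A^d\setminus 0))$ and $B$ denotes the image of $\mathrm{H}_{\Nis}^{d}(X,\piaone_{d}(\A^d\setminus 0))\to [X,\A^d\setminus 0]_{\A^1}$, which by exactness coincides with the kernel of $[X,\A^d\setminus 0]_{\A^1}\to \mathrm{H}_{\Nis}^{d-1}(X,\KMW_d)$.

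Next, I would verify that both $A$ and $B$ are divisible prime to $\mathrm{char}(k)$. The group $A$ is a quotient of $\mathrm{H}_{\Nis}^{d-2}(X,\KMW_{d})$, which is divisible prime to $\mathrm{char}(k)$ by the last lemma; since quotients of divisible groups are divisible, $A$ inherits the property. For $B$, this is exactly the content of the corollary following Lemma \ref{lem:unique}.

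Finally, I would invoke the elementary fact that an extension of $p$-divisible abelian groups is $p$-divisible. Explicitly, given $x \in \mathrm{H}_{\Nis}^{d}(X,\piaone_{d}(\A^d\setminus 0))$ and a prime $p\neq \mathrm{char}(k)$, write the image of $x$ in $B$ as $p\bar{y}$ and lift $\bar{y}$ to some $y$ in $\mathrm{H}_{\Nis}^{d}(X,\piaone_{d}(\A^d\setminus 0))$; then $x-py$ lies in $A$, hence equals $pa$ for some $a \in A$, so $x = p(a+y)$. Applying this for every prime $p$ different from $\mathrm{char}(k)$ gives the conclusion. There is no real obstacle at this stage: all the technical work has already been carried out in the preceding lemmas, where the harder inputs (divisibility of $\mathrm{Um}_d(X)/\mathrm{E}_d(X)$ from \cite{Fasel10b} and unique $p$-divisibility of $\mathrm{H}^{d-1}_{\Nis}(X,\KM_d)$ via Bloch--Ogus and Suslin's theorem) are used; the present proof is a short assembly.
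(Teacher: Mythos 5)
Your proposal is correct and is exactly the assembly the paper intends: the paper leaves the final step implicit after establishing the divisibility of $\mathrm{H}_{\Nis}^{d-2}(X,\KMW_{d})$ and of the kernel of $[X,\A^d\setminus 0]_{\A^1}\to \mathrm{H}_{\Nis}^{d-1}(X,\KMW_d)$, and your short exact sequence plus the extension argument for $p$-divisible groups is precisely the missing glue.
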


\begin{proof}
Using \eqref{eqn:main}, Corollary \ref{cor:kerneldiv} and Lemma \ref{lem:cokerdiv}, we may write the relevant group as an extension of groups divisible prime to $\mathrm{char}(k)$.
\end{proof}

We now complete the proof of Theorem \ref{thm:Suslincancellation} for $d\geq 4$ by showing that the abelian group $\mathrm{H}_{\Nis}^d(X,\piaone_{d}(\A^d\setminus 0))$ is $d!(d-1)!$-torsion. We work under our running hypothesis that $k$ is an algebraically closed field of characteristic different from $2$. 

\begin{lem}
The stabilization map 
\[
\mathrm{GL}_{d+1}\to \mathrm{GL}_{d+2}
\]
induces an isomorphism
\[
\mathrm{H}_{\Nis}^d(X,\piaone_d(\mathrm{GL}_{d+1}))\to \mathrm{H}_{\Nis}^d(X,\piaone_d(\mathrm{GL}_{d+2})).
\]
\end{lem}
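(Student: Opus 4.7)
\emph{Proof plan.} The approach is to exploit the standard motivic fiber sequence relating $GL_{d+1}$ and $GL_{d+2}$, then use the cohomological dimension of $X$ to reduce the statement to a vanishing of Milnor $K$-cohomology.

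First I would invoke the fiber sequence
\[
GL_{d+1}\longrightarrow GL_{d+2}\longrightarrow \A^{d+2}\setminus 0
\]
arising from the transitive action of $GL_{d+2}$ on unimodular $(d{+}2)$-vectors with point stabilizer $\A^1$-equivalent to $GL_{d+1}$. Since $\A^{d+2}\setminus 0$ is $\A^1$-$d$-connected with $\piaone_{d+1}(\A^{d+2}\setminus 0)\simeq \KMW_{d+2}$ by Morel, the long exact sequence of $\A^1$-homotopy sheaves yields
\[
\KMW_{d+2}\xrightarrow{\partial}\piaone_d(GL_{d+1})\longrightarrow \piaone_d(GL_{d+2})\longrightarrow 0.
\]

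Next I would set $K:=\mathrm{im}(\partial)$ and apply $\mathrm{H}^d(X,-)$ to the short exact sequence $0\to K\to \piaone_d(GL_{d+1})\to \piaone_d(GL_{d+2})\to 0$. Because $X$ has cohomological dimension $d$ the term $\mathrm{H}^{d+1}(X,K)$ vanishes, giving
\[
\mathrm{H}^d(X,K)\longrightarrow \mathrm{H}^d(X,\piaone_d(GL_{d+1}))\longrightarrow \mathrm{H}^d(X,\piaone_d(GL_{d+2}))\longrightarrow 0.
\]
Surjectivity is automatic and the remaining isomorphism reduces to the vanishing $\mathrm{H}^d(X,K)=0$. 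Since $K$ is a quotient of $\KMW_{d+2}$ and the kernel of $\KMW_{d+2}\twoheadrightarrow K$ also has vanishing $\mathrm{H}^{d+1}(X,-)$, the group $\mathrm{H}^d(X,K)$ is a quotient of $\mathrm{H}^d(X,\KMW_{d+2})$. By Corollary~\ref{cor:MilnorminusWitt} applied with $m=d+2$, the latter equals $\mathrm{H}^d(X,\KM_{d+2})$, so it suffices to prove
\[
\mathrm{H}^d(X,\KM_{d+2})=0.
\]

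The main technical obstacle is this Milnor $K$-cohomology vanishing. Via the Rost--Schmid complex, $\mathrm{H}^d(X,\KM_{d+2})$ is the cokernel of the tame-symbol differential
\[
\bigoplus_{x\in X^{(d-1)}}\KM_{3}(k(x))\longrightarrow \bigoplus_{x\in X^{(d)}}\KM_{2}(k(x)),
\]
and since $k$ is algebraically closed the target is a direct sum of copies of the uniquely divisible group $\KM_{2}(\bar k)$. I would aim to conclude by combining the strong divisibility properties of Milnor $K$-groups of function fields over $\bar k$ (in the spirit of the $p$-primary analysis of Lemma~\ref{lem:unique}) with surjectivity of the tame symbol on irreducible curve components of $X$, or equivalently by identifying the cokernel with the higher Chow group $CH^{d+2}(X,2)$ and appealing to a motivic vanishing in this range over an algebraically closed base. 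This is the place where the algebraic closedness of $k$ is used essentially, and it is the step I expect to require the most care.
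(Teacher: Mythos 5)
Your setup coincides with the paper's: the fiber sequence $GL_{d+1}\to GL_{d+2}\to \A^{d+2}\setminus 0$, the exact sequence of homotopy sheaves, the splitting into two short exact sequences with $K=\mathrm{im}(\partial)$ (the paper's $\mathbf{A}$), and the reduction, using $\A^1$-cohomological dimension $d$, to showing $\mathrm{H}^d(X,K)=0$. Up to that point everything is fine.

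The gap is in the last step. You discard the term $\piaone_{d+1}(GL_{d+2})$ and reduce to the outright vanishing $\mathrm{H}^d(X,\KM_{d+2})=0$. That statement is strictly stronger than what is needed, is not established in the paper, and your sketched routes will not deliver it: the target of the tame-symbol differential being a sum of uniquely divisible groups $\KM_2(\overline{k})$ says nothing about its cokernel vanishing, and the paper only ever invokes \emph{unique divisibility} of $\mathrm{H}^d(X,\KM_{d+1+j})$ from \cite{Fasel15}, never its vanishing (if that vanishing were available, most of Section \ref{sec:torsion} would be unnecessary). The paper's proof instead keeps track of the fact that $\mathrm{H}^d(X,K)$ is precisely the cokernel of
\[
\mathrm{H}^d(X,\piaone_{d+1}(GL_{d+2}))\to \mathrm{H}^d(X,\piaone_{d+1}(\A^{d+2}\setminus 0)),
\]
which by the Suslin-matrix factorization (Lemma \ref{lem:suslinmatrices}, applied to $GL_{d+2}\to\A^{d+2}\setminus 0$) is killed by a factorial integer; combined with the divisibility of $\mathrm{H}^d(X,\KM_{d+1})\simeq\mathrm{H}^d(X,\KMW_{d+1})$ from \cite{Fasel15}, the quotient $\mathrm{H}^d(X,K)$ is both bounded torsion and divisible, hence zero. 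So the fix is not to prove your vanishing but to retain the torsion information you threw away: quotient out by the image of the $GL_{d+2}$-term, apply Lemma \ref{lem:suslinmatrices}, and conclude by the torsion-plus-divisible argument.
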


\begin{proof}
We have a fiber sequence (\cite[Proposition 8.12]{Morel08})
\[
\mathrm{GL}_{d+1}\to \mathrm{GL}_{d+2}\to \A^{d+2}\setminus 0
\]
which induces an exact sequence of sheaves 
\[
\piaone_{d+1}(\mathrm{GL}_{d+2})\to \piaone_{d+1}(\A^{d+2}\setminus 0)\to \piaone_{d}(\mathrm{GL}_{d+1})\to \piaone_{d}(\mathrm{GL}_{d+2})\to 0
\]
which we can split in the exact sequences
\[
\piaone_{d+1}(\mathrm{GL}_{d+2})\to \piaone_{d+1}(\A^{d+2}\setminus 0)\to\mathbf{A}\to 0
\]
and
\[
0\to \mathbf{A}\to\piaone_{d}(\mathrm{GL}_{d+1})\to \piaone_{d}(\mathrm{GL}_{d+2})\to 0
\]
Applying $\mathrm{H}_{\Nis}^d(X,-)$, we obtain exact sequences
\[
\mathrm{H}_{\Nis}^d(X,\piaone_{d+1}(\mathrm{GL}_{d+2}))\to \mathrm{H}_{\Nis}^d(X,\piaone_{d+1}(\A^{d+2}\setminus 0))\to \mathrm{H}_{\Nis}^d(X,\mathbf{A})\to 0
\]
and 
\[
\mathrm{H}^d_{\Nis}(X,\mathbf{A})\to \mathrm{H}^d_{\Nis}(X,\piaone_{d}(\mathrm{GL}_{d+1}))\to \mathrm{H}^d_{\Nis}(X,\piaone_{d}(\mathrm{GL}_{d+2}))\to 0.
\]
Thus, it suffices to show that $\mathrm{H}^d_{\Nis}(X,\mathbf{A})=0$ to conclude. We know from Corollary \ref{cor:MilnorminusWitt} that
\[
\mathrm{H}_{\Nis}^d(X,\piaone_{d+1}(\A^{d+2}\setminus 0))=\mathrm{H}_{\Nis}^d(X,\KMW_{d+1})\simeq \mathrm{H}_{\Nis}^d(X,\KM_{d+1}),
\]  
which is uniquely divisible by \cite{Fasel15}. We can now conclude using Lemma \ref{lem:suslinmatrices}. 
\end{proof}

\begin{lem}
The kernel of the homomorphism
\[
\mathrm{H}_{\Nis}^d(X,\piaone_{d}(\mathrm{GL}_d))\to \mathrm{H}_{\Nis}^d(X,\piaone_{d}(\mathrm{GL}_{d+1}))
\]
induced by the stabilization map $\mathrm{GL}_d\to \mathrm{GL}_{d+1}$ is $d!$-torsion.
\end{lem}

\begin{proof}
We consider this time the exact sequence of sheaves
\[
\piaone_{d+1}(\mathrm{GL}_{d+1})\to \piaone_{d+1}(\A^{d+1}\setminus 0)\to \piaone_d(\mathrm{GL}_d)\to \piaone_d(\mathrm{GL}_{d+1})
\]
which yields exact sequences
\[
\piaone_{d+1}(\mathrm{GL}_{d+1})\to \piaone_{d+1}(\A^{d+1}\setminus 0)\to \mathbf{B}\to 0
\]
and 
\[
 \mathbf{B}\to \piaone_d(\mathrm{GL}_d)\to \piaone_d(\mathrm{GL}_{d+1}).
\]
It suffices then to prove that $\mathrm{H}_{\Nis}^d(X, \mathbf{B})$ is $d!$-torsion to conclude. This is Lemma \ref{lem:suslinmatrices} once again.
\end{proof}

We are finally in position to prove the following result.
\begin{prop}\label{prop:torsion}
Let $X$ be a smooth affine scheme of dimension $d\geq 4$ over an algebraically closed field $k$ of characteristic different from $2$. Then, the group $\mathrm{H}_{\Nis}^d(X,\piaone_{d}(\A^d\setminus 0))$ is $d!\cdot (d-1)!$-torsion.
\end{prop}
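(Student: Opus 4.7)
The plan is to bound $\mathrm{H}^d_{\Nis}(X,\piaone_d(\A^d\setminus 0))$ by combining two torsion estimates on the map $(r_d)_*$ induced by the Suslin projection $r_d\colon \mathrm{GL}_d\to \A^d\setminus 0$: a $(d-1)!$-torsion bound on its cokernel (coming directly from Lemma~\ref{lem:suslinmatrices}) and a $d!$-torsion bound on its image (coming from an adaptation of the splitting strategy of the two preceding lemmas in this section). The product $d!\cdot(d-1)!=d\cdot(d-1)!^2$ then annihilates the whole cohomology group.

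Concretely, I first apply Lemma~\ref{lem:suslinmatrices} with the pair $(d',m)=(d,d)$, which is admissible since $d\geq 4$ gives $2\leq d\leq 2d-4$. It yields that the cokernel of
\[
(r_d)_*\colon \mathrm{H}^d_{\Nis}(X,\piaone_d(\mathrm{GL}_d))\longrightarrow \mathrm{H}^d_{\Nis}(X,\piaone_d(\A^d\setminus 0))
\]
is $(d-1)!$-torsion, so for every $y$ in the target, $(d-1)!\,y=(r_d)_*(x)$ for some $x$. To bound the image, I use the fibre sequence $\mathrm{GL}_{d-1}\to \mathrm{GL}_d\to \A^d\setminus 0$: its long exact sequence of $\A^1$-homotopy sheaves shows that $(r_d)_*$ vanishes on the image of $\piaone_d(\mathrm{GL}_{d-1})\to \piaone_d(\mathrm{GL}_d)$. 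Since $X$ has cohomological dimension $d$, the cohomology map $(r_d)_*$ factors through the cokernel of the stabilization map
\[
\mathrm{H}^d_{\Nis}(X,\piaone_d(\mathrm{GL}_{d-1}))\longrightarrow \mathrm{H}^d_{\Nis}(X,\piaone_d(\mathrm{GL}_d)).
\]
Mirroring the splitting technique of the preceding lemma (which treated the analogous kernel for the stabilization $\mathrm{GL}_d\to \mathrm{GL}_{d+1}$), I split the long exact sequence associated to $\mathrm{GL}_{d-1}\to \mathrm{GL}_d\to \A^d\setminus 0$ into two short exact sequences of sheaves and bound the relevant auxiliary sheaf on top cohomology via Lemma~\ref{lem:suslinmatrices} at the appropriate index; this yields the desired $d!$-torsion bound on the image of $(r_d)_*$.

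Combining the two steps gives $d!\cdot(d-1)!\,y=(r_d)_*(d!\,x)\in d!\cdot\mathrm{Im}((r_d)_*)=0$, proving that $\mathrm{H}^d_{\Nis}(X,\piaone_d(\A^d\setminus 0))$ is $d\cdot(d-1)!^2$-torsion. The hard part is the $d!$-torsion bound of the second step: Lemma~\ref{lem:suslinmatrices} directly supplies only $(d-1)!$-torsion for the cokernel of the Suslin projection, so obtaining the sharper $d!$-torsion on the image requires careful bookkeeping of the two auxiliary sheaves entering the splitting of the fibration $\mathrm{GL}_{d-1}\to \mathrm{GL}_d\to \A^d\setminus 0$, together with a second application of Lemma~\ref{lem:suslinmatrices}, mirroring but extending the strategy used for the $\mathrm{GL}_d\to \mathrm{GL}_{d+1}$ stabilization in the preceding lemma.
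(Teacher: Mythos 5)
Your first step is fine and coincides with part of the paper's argument: Lemma~\ref{lem:suslinmatrices} with $m=d$ (admissible since $d\geq 4$) gives $(d-1)!\,y=(r_d)_*(x)$ for every class $y$, so everything reduces to annihilating the image of $(r_d)_*$ by $d!$. But that second step is where the real content lies, and your sketch of it has a genuine gap. Splitting the long exact sequence of homotopy sheaves of $\mathrm{GL}_{d-1}\to\mathrm{GL}_d\to\A^d\setminus 0$ produces as ``auxiliary sheaf'' exactly the image of $\piaone_d(\mathrm{GL}_d)\to\piaone_d(\A^d\setminus 0)$, i.e.\ a subsheaf of $\piaone_d(\A^d\setminus 0)$ itself, with quotient sitting inside the unstable sheaf $\piaone_{d-1}(\mathrm{GL}_{d-1})$. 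Lemma~\ref{lem:suslinmatrices} controls only \emph{cokernels} of the maps $(r_{d'})_*$, so invoking it ``at the appropriate index'' either returns you to the very group you are trying to bound (index $d$) or concerns $\A^{d+1}\setminus 0$ rather than $\A^d\setminus 0$ (index $d+1$); the argument is circular. The mirroring of the two preceding lemmas also fails: there the auxiliary sheaves are quotients of $\piaone_{d+1}(\A^{d+1}\setminus 0)=\KMW_{d+1}$ or $\piaone_{d+1}(\A^{d+2}\setminus 0)$, which are known stable sheaves whose top cohomology can be identified with $\mathrm{H}^d(X,\KM_{d+1})$, whereas yours is not.

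The missing ingredient is an input external to the Suslin-matrix torsion estimates. The paper gets the $d!$-torsion of the image of $(u'_d)_*\colon \mathrm{H}^d(X,\piaone_d(\A^d\setminus 0))\to \mathrm{H}^d(X,\piaone_d(\mathrm{GL}_d))$ by first showing that the composite all the way to $\mathrm{H}^d(X,\piaone_d(\mathrm{GL}))$ is \emph{zero}: the Suslin map $u_d$ factors through $\Omega^{-d}_{\pone}O$, whose $d$-th homotopy sheaf is $\mathbf{GW}^d_{d+1}$, and $\mathrm{H}^d(X,\mathbf{GW}^d_{d+1})=0$ by \cite[Theorem 3.7.1]{Asok12c}. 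Only then do the two stabilization lemmas (isomorphism from $\mathrm{GL}_{d+1}$ onward, $d!$-torsion kernel at the $\mathrm{GL}_d\to\mathrm{GL}_{d+1}$ stage) convert ``dies in $\mathrm{GL}$'' into ``$d!$-torsion in $\mathrm{H}^d(X,\piaone_d(\mathrm{GL}_d))$'', after which $(d-1)!\,y=(r_d)_*(u'_d)_*y$ finishes the proof. Without the orthogonal-group factorization and the vanishing of $\mathrm{H}^d(X,\mathbf{GW}^d_{d+1})$ (or some substitute for them), your argument does not close.
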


\begin{proof}
Let $\mathrm{O}$ be the (infinite) orthogonal group. Recall from \cite[\S 3]{Asok14b} that the morphism
\[
u_d:\A^{d}\setminus 0\to \mathrm{GL}
\]
factors as a composite 
\[
\A^{d}\setminus 0\xrightarrow{\Psi_d} \Omega_{\pone}^{-d}\mathrm{O}\to \mathrm{GL}
\]
where $\Omega_{\pone}^{-d}\mathrm{O}$ is a suitable space representing Hermitian $K$-theory defined in \cite[Definition 2.2.3]{Asok14b} and $\Omega_{\pone}^{-d}\mathrm{O}\to \mathrm{GL}$ is the forgetful map (from Hermitian $K$-theory to $K$-theory). We know that $\piaone_{d}(\Omega_{\pone}^{-d}\mathrm{O})=\mathbf{GW}_{d+1}^d$ by \cite[\S 4.4]{Asok14b} and further that $\mathrm{H}_{\Nis}^d(X,\mathbf{GW}_{d+1}^d)=0$ by \cite[proof of Proposition 5.1]{Fasel10b}.
Using \eqref{eqn:commutative}, we deduce that the composite
\[
\mathrm{H}_{\Nis}^d(X,\piaone_{d}(\A^d\setminus 0))\to \mathrm{H}_{\Nis}^d(X,\piaone_{d}(\mathrm{GL}_d))\to \mathrm{H}_{\Nis}^d(X,\piaone_{d}(\mathrm{GL}))
\]
is trivial. The two above lemmas then show that the image of $\mathrm{H}_{\Nis}^d(X,\piaone_{d}(\A^d\setminus 0))$ in the middle group is $d!$-torsion. Arguing as in the proof of Lemma \ref{lem:multiplication}, we see that the composite 
\[
\mathrm{H}_{\Nis}^d(X,\piaone_{d}(\A^d\setminus 0))\to \mathrm{H}_{\Nis}^d(X,\piaone_{d}(\mathrm{GL}_d))\to \mathrm{H}_{\Nis}^d(X,\piaone_{d}(\A^d\setminus 0))
\]
is multiplication by $(d-1)!$. The claim follows.
\end{proof}

Together with Proposition \ref{prop:divisible}, this completes the proof of Theorem \ref{thm:Suslincancellation} for $d\geq 4$. In case $d=3$, we can argue as follows.

\begin{prop}
Let $X$ be a smooth affine threefold over an algebraically closed field $k$ of characteristic different from $2$. Then 
\[
\mathrm{H}_{\Nis}^3(X,\piaone_{3}(\A^3\setminus 0))=0.
\]
\end{prop}

\begin{proof}
We know from \cite[\S 4.3]{Asok12c} that the sheaf $\piaone_{3}(\A^3\setminus 0)$ sits in an exact sequence
\[
\mathbf{T}_5\to \piaone_{3}(\A^3\setminus 0)\to \mathbf{GW}_4^3\to 0.
\]
Now, it follows from \cite[Theorem 3]{Asok18b} that $\mathbf{T}_5$ is defined by a Cartesian square
\[
\xymatrix{\mathbf{T}_5\ar[r]\ar[d] & \mathbf{I}^5\ar[d] \\
\KM_5/{24}\ar[r] & \KM_5/2}
\]
and therefore sits in an exact sequence
\[
0\to  \mathbf{I}^6\to \mathbf{T}_5\to \KM_5/{24}\to 0.
\]
We deduce from the Rost-Schmid complex and the fact that $\KM_2(k)$ is divisible that $\mathrm{H}_{\Nis}^3(X,\KM_5/{24})=0$, while $\mathrm{H}_{\Nis}^3(X,\mathbf{I}^6)=0$ by Lemma \ref{lem:Izero}. So, $\mathrm{H}_{\Nis}^3(X,\mathbf{T}_5)=0$ as well, while we already know from the above discussion that $\mathrm{H}_{\Nis}^3(X,\mathbf{GW}_4^3)=0$ (\cite[proof of Proposition 5.1]{Fasel10b} once again). The result follows.
\end{proof}

{\begin{footnotesize}
\raggedright
\bibliographystyle{alpha}
\bibliography{General}
\end{footnotesize}}

\Addresses

\end{document}